\numberwithin{equation}{section}
\newcommand {\Z}{\mathbb{Z}}
\newcommand {\R}{\mathbb{R}}
\newcommand {\C}{\mathbb{C}}
\newcommand{\be}{\begin{equation}}
\newcommand{\ee}{\end{equation}}
\newcommand{\eop}{\hspace*{\fill} $\Box$}
\newcommand{\qdim}[1]{\mathrm{qdim}[#1]}
\newcommand{\singlet}{\mathcal{W}(2,2p-1)}
\newtheorem{theorem}{Theorem}
\newtheorem{lemma}[theorem]{Lemma}
\newtheorem{proposition}[theorem]{Proposition}
\newtheorem{corollary}[theorem]{Corollary}
\newtheorem{conjecture}[theorem]{Conjecture}
\newtheorem{remark}[theorem]{Remark}
\newtheorem{definition}[theorem]{Definition}
\newenvironment{proof}[1][Proof]{\begin{trivlist}
\item[\hskip \labelsep {\bfseries #1}]}{\end{trivlist}}
\newcommand{\qed}{\nobreak \ifvmode \relax \else
      \ifdim\lastskip<1.5em \hskip-\lastskip
      \hskip1.5em plus0em minus0.5em \fi \nobreak
      \vrule height0.75em width0.5em depth0.25em\fi}
\begin{document}
\title{\bf False theta functions and the Verlinde formula}

\author{Thomas Creutzig and Antun Milas } \maketitle

\abstract{

We discover new analytic properties of classical partial and false theta functions and their potential applications to representation theory 
of $\mathcal{W}$-algebras and vertex algebras in general.  More precisely, motivated by clues from conformal field theory, first, we are able to determine modular-like transformation properties of {\em regularized} partial and false theta functions. Then, after suitable identification of regularized partial/false theta functions with the characters of atypical modules for the singlet vertex algebra $\mathcal{W}(2,2p-1)$, we formulate a Verlinde-type formula for the fusion rules of irreducible $\mathcal{W}(2,2p-1)$-modules. 
}

\section{Introduction: partial and false theta functions}

In this paper, we are primarily concerned with modular-like transformation properties of functions
\begin{equation} \label{ptf}
P_{a,b}(u,\tau)=\sum_{n=0}^\infty z^{n+\frac{b}{2a}} q^{a(n+\frac{b}{2a})^2},  \ \ q=e(\tau), \ \ z=e(u),
\end{equation}
where $a,b \in 
\mathbb{N}$, $\tau \in \mathbb{H}$, $u \in \mathbb{C}$,  called {\em partial theta functions} 
\cite{AB}\footnote{In \cite{AB} a slightly different definition was used.}. If $z$ is specialized to be $q^c$, we call them partial 
theta series.
The names explain themselves as the usual Jacobi theta functions/series are also given by (\ref{ptf}), but with the summation over 
all integers. We will also be interested in closely 
related series called {\em false theta series}, where the summation is over $\mathbb{Z}$, but the sign 
choice does not correspond to any specialization of the theta function. A typical example is 
\begin{equation} \label{fts}
\sum_{n \in \mathbb{Z}} {\rm sgn}(n) q^{a(n+\frac{b}{2a})^2},
\end{equation}
which plays  a prominent role in our work.  Actually, the only false theta series appearing in this paper 
are simply differences of two partial thetas.

While theta functions of course enjoy modular 
transformation properties, and are related to numerous concepts in mathematics and theoretical physics, partial/false theta functions do not seem  to have any modular properties and their relevance in mathematics is somewhat obscure and random.
Most prominently, false/partial thetas appear in various identities involving $q$-hypergeometric series and even some partition identities (for a thorough selection of results see \cite{AB} and references therein). 
They also seem to arise in computation of topological invariants. 
For example, in \cite{LZ}, the (rescaled) Witten-Reshetikhin-Turaev (WRT) invariant associated to the homology spheres was studied as radial limiting value of certain partial/false theta series. Also,  it was recently shown 
in \cite{GL} (see Section 14)
that the generating series of colored Jones polynomials for alternating knots are  given by (\ref{fts}).
But as far as we know, there were no serious attempts to relate false theta functions to concepts in infinite-dimensional representation theory and to ideas in conformal field theory (eg. Verlinde formula). We also point out that partial/false theta functions are {\em not}  mock theta functions as studied in \cite{Z}, although there is a connection (see  \cite{Za} for instance). 

In this paper we take a radical different point of view to (\ref{ptf}) and eventually (\ref{fts}).
Our starting observation is that some of the series discussed earlier are essentially (i.e. up to Dedekind $\eta$-function factor) graded dimensions of modules for the vertex operator algebra $\mathcal{W}(2,2p-1)$, also called the singlet algebra (this was also noticed by Flohr in  \cite{Fl}). This vertex algebra did not attract so much attention, primarily because it is not $C_2$-cofinite, although it is instrumental for studying more interesting triplet vertex algebra \cite{FHST}, \cite{FGST1}, \cite{FGST2}, \cite{NT}, \cite{TW}, \cite{AdM1}, \cite{CF}, \cite{Fl}, etc.  Characters of modules for the triplet are well-understood; they can be organized 
so that they form a vector-valued (logarithmic) modular form. Using this approach a Verlinde-type formula can be also obtained  \cite{FHST} (in the rational setup see \cite{Ve}, and the proof of the Verlinde conjecture by Huang \cite{Hu}).  
Thus, it is very natural to ask: (i) Do irreducible characters of the singlet algebra also  obey modular-like transformations properties, and (ii) is there a 
Verlinde-type formula for irreps based on these properties?
Of course, because we have infinitely many irreps, in modular transformation formulas we also allow integral part as in other works 
on "continuous"  Verlinde-type formulas \cite{AC},  \cite{BR}, \cite{CR1}, \cite{CR2}, \cite{CR3}.
In order to describe the family of singlet algebras $\mathcal{W}(2,2p-1)$ 
parameterized by integers $p\geq 2$ we introduce the numbers $\alpha_+=\sqrt{2p}, \alpha_-=-\sqrt{2/p}$ and $\alpha_0=\alpha_++\alpha_-$.
For our purposes we first note that the singlet algebra admits two types of $\mathbb{Z}_{\geq 0}$-graded  representations:
\begin{itemize}
\item[(1)] generic (or typical). Fock space representations $F_{\lambda}$, with the character
$$\text{ch}[{F}_{\lambda}](\tau)=\frac{q^{\frac{1}{2}(\lambda-\alpha_0/2)^2}}{\eta(\tau)}; \ \ \  \lambda \in \mathbb{C} $$
and $\eta(\tau)=q^{\frac{1}{24}} \prod_{i=1}^\infty (1-q^i)$ is the usual Dedekind eta-function.
\item[(2)] Non-generic (or atypical).  Certain subquotients or reducible Fock spaces, denoted by $M_{r,s}$, with the character
$$\text{ch}[M_{r,s}](\tau)=\frac{ P_{p,pr-s}(0,\tau)-P_{p,pr+s}(0,\tau)}{\eta(\tau)},$$ 
where $P_{a,b}$ is as in (\ref{ptf}). The range is $r \in \mathbb{Z}$ and $1 \leq s \leq p$ \footnote{Strictly speaking $M_{r,p}$ modules can be also viewed as typical representations. For a precise definition of typical/atypical see Definition 18.}.
\end{itemize}
Modular transformation properties of (1) are easily computed via Gauss' integral, but those of (2)
are much more delicate. As a remedy,  we introduce the $\epsilon$-regularized characters $\text{ch}[X^\epsilon] $  (see Section 2 for details; see also \cite{Fl})
such that $\lim_{\epsilon \to 0}\text{ch}[X^\epsilon]=\text{ch}[X]$. These regularized atypical characters transform much nicer
as illustrated in our first main results of the paper. 

\begin{theorem}
The modular $S$-transformation of atypical characters is
\begin{equation*}
\begin{split}
\text{ch}[M^\epsilon_{r,s}]\Bigl(-\frac{1}{\tau}\Bigr)&=
\int_{\mathbb R} S^\epsilon_{(r,s),\mu+\alpha_0/2}\mathrm{ch}[ F^\epsilon_{\mu+\alpha_0/2}](\tau)d\mu +X^\epsilon_{r,s}(\tau)
 \end{split}
\end{equation*}
with
\[ 
 S^\epsilon_{(r,s),\mu+\alpha_0/2}= -e^{-2\pi \epsilon((r-1)\alpha_+/2+\mu)}e^{\pi i (r-1)\alpha_+\mu}
\frac{\sin\bigl(\pi s \alpha_-(\mu+i\epsilon)\bigr)}{\sin\bigl(\pi \alpha_+(\mu+i\epsilon)\bigr)}\\
\]
and
\[
X^\epsilon_{r,s}(\tau)= \frac{1}{4 \eta(\tau)}({\rm sgn}(\mathrm{Re}(\epsilon))+1)\sum_{n\in\mathbb Z}(-1)^{rn}
e^{\pi i \frac{s}{p}n}q^{\frac{1}{2}(\frac{n^2}{\alpha_+^2}-\epsilon^2)}
\bigl(q^{-i\epsilon \frac{n}{\alpha_+}}-q^{i\epsilon \frac{n}{\alpha_+}}\bigr).
\]
\end{theorem}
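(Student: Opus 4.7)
The plan is to consolidate the difference of partial thetas defining $\mathrm{ch}[M^\epsilon_{r,s}]$ into a single false theta series over $\mathbb Z$, use a Fourier integral representation of the sign function to convert this into a contour integral of a Jacobi theta, apply the standard $S$-transformation to the inner theta, and finally deform the contour back to the real axis, picking up residues that assemble into $X^\epsilon_{r,s}$.

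The first step rests on the arithmetic observation $-(pr+s) \equiv pr-s \pmod{2p}$, which allows one to merge the two arithmetic progressions defining $P^\epsilon_{p,pr-s}$ and $P^\epsilon_{p,pr+s}$ into a single sum
\[
P^\epsilon_{p,pr-s}(\tau) - P^\epsilon_{p,pr+s}(\tau) = \sum_{m \in \mathbb Z} \mathrm{sgn}(m + c_{r,s})\, e^{-2\pi\epsilon(m+c_{r,s})}\, q^{p(m+c_{r,s})^2}, \qquad c_{r,s} = \tfrac{r}{2} - \tfrac{s}{2p},
\]
up to the precise convention for the $\epsilon$-regularization. One then substitutes the Fourier representation $\mathrm{sgn}(x)\, e^{-2\pi\epsilon x} = \tfrac{1}{\pi i}\,\mathrm{p.v.}\!\int_{\mathbb R}\tfrac{e^{2\pi i x\mu}}{\mu - i\epsilon}\, d\mu$ (valid for $\mathrm{Re}(\epsilon) \neq 0$) and interchanges sum and integral, which is legitimate because of the $\epsilon$-regularization. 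The inner $m$-sum is now a bona fide Jacobi theta function, and applying its $S$-transformation after $\tau \mapsto -1/\tau$, together with $\eta(-1/\tau) = \sqrt{-i\tau}\,\eta(\tau)$, produces a Gaussian in $\mu$ which, after the change of variables $\mu \mapsto \mu + \alpha_0/2$, is exactly $\mathrm{ch}[F^\epsilon_{\mu + \alpha_0/2}](\tau)$.

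Performing a finite geometric sum over the residue class of $m$ modulo $2p$, while absorbing the Fourier pole at $\mu = i\epsilon$, produces the sine ratio $\sin(\pi s\alpha_-(\mu+i\epsilon))/\sin(\pi\alpha_+(\mu+i\epsilon))$; the exponentials $e^{-2\pi\epsilon((r-1)\alpha_+/2+\mu)}\, e^{\pi i(r-1)\alpha_+\mu}$ come from keeping track of the shifts $c_{r,s}$ and of the Jacobi quadratic phases. The correction $X^\epsilon_{r,s}$ arises from the contour deformation bringing the principal-value integral back to an ordinary integral along the real line: the sine-ratio kernel has simple poles at $\mu = n/\alpha_+ - i\epsilon$, which are crossed only when $\mathrm{Re}(\epsilon) > 0$. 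The collected residues reassemble into the theta-like sum in $X^\epsilon_{r,s}$, with the factor $q^{-i\epsilon n/\alpha_+} - q^{i\epsilon n/\alpha_+}$ emerging from pairing poles at $n$ and $-n$ (and from the two halves of the Fourier denominator $(\mu - i\epsilon)^{-1}$), and the prefactor $\tfrac14(\mathrm{sgn}(\mathrm{Re}(\epsilon))+1)$ encoding the indicator that residues are crossed at all.

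The main obstacle will be this last step: the sine ratio grows exponentially in $|\mathrm{Im}(\mu)|$, so one must choose the deformed contour with care and justify the deformation in spite of this growth, matching the residues so that the signs, phases $(-1)^{rn}\, e^{\pi i s n/p}$ and quadratic exponents $q^{n^2/(4p) - \epsilon^2/2}$ emerge in exactly the stated form. A secondary technicality is the careful bookkeeping of the various $\epsilon$-dependent factors when identifying the main integrand with $S^\epsilon_{(r,s),\mu+\alpha_0/2}\,\mathrm{ch}[F^\epsilon_{\mu+\alpha_0/2}]$, which must be consistent with the regularization scheme fixed in Section 2.
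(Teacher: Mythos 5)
Your overall architecture (merge the two partial thetas into a sgn-weighted bilateral sum, convert the sgn into an integral kernel, transform the resulting genuine theta, resum a geometric series to get the sine ratio) is a legitimately different route from the paper, which instead proves a standalone $S$-transformation for the single regularized partial theta $P_\epsilon(u,\tau)$ (Theorem \ref{thm:partialS}) via the elliptic functional equations, uniqueness of $\vartheta_{4,\epsilon}$, and a normalization computation, and then simply substitutes $u=-\alpha_+\beta^{\pm}_{r,s}\tau$, $\tau\mapsto\alpha_+^2\tau$ through Proposition \ref{prop:charfalse}. However, your sketch has a genuine gap at its analytic core. The Fourier identity you invoke is false as stated: for $\epsilon\notin i\mathbb R$ the pole $\mu=i\epsilon$ is off the real axis, no principal value is involved, and a residue computation gives $\tfrac{1}{\pi i}\int_{\mathbb R}\frac{e^{2\pi i x\mu}}{\mu-i\epsilon}\,d\mu=\bigl(\mathrm{sgn}(x)+\mathrm{sgn}(\mathrm{Re}\,\epsilon)\bigr)e^{-2\pi\epsilon x}$, not $\mathrm{sgn}(x)e^{-2\pi\epsilon x}$. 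The discrepancy, summed over the lattice, is a full regularized theta function, i.e.\ exactly of the size and shape of the correction $X^\epsilon_{r,s}$ you are trying to compute, so this is not a bookkeeping slip: with your identity the main term and the correction term get mixed up from the start.

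Relatedly, the mechanism you propose for $X^\epsilon_{r,s}$ does not work as described. The poles $\mu=n/\alpha_+-i\epsilon$ of the sine-ratio kernel never lie on $\mathbb R$ for admissible $\epsilon$, the theorem's final integral is already along $\mathbb R$, and no contour deformation of it is forced; a residue/contour-crossing argument (this is what the paper's Proposition \ref{prop:epsdep} actually does) can only determine the \emph{jump} of the theta-coefficient as $\mathrm{Re}(\epsilon)$ changes sign, never its value on either side. The paper needs a separate normalization step for precisely this: the uniqueness of solutions of the theta functional equations, the heat-operator argument fixing the $\tau$-dependence, and the identity $2P_\epsilon=F\vartheta_\epsilon+\vartheta_{2,\epsilon}$ together with the exact transform of $\vartheta_{2,\epsilon}$ to conclude $\alpha=1$, i.e.\ that the correction is exactly $\tfrac12(\mathrm{sgn}(\mathrm{Re}\,\epsilon)+1)\vartheta_{4,\epsilon}$ and in particular vanishes identically for $\mathrm{Re}(\epsilon)<0$ (the case the Verlinde section relies on). Nothing in your sketch supplies this constant. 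Finally, the sum--integral interchange you declare "legitimate because of the $\epsilon$-regularization" is exactly where the difficulty lives: the geometric resummation producing $1/\sin(\pi\alpha_+(\mu+i\epsilon))$ converges only for one sign of $\mathrm{Re}(\epsilon)$, and handling the other sign is precisely what generates $X^\epsilon_{r,s}$; the merging of the two progressions for general $r$ (your $-(pr+s)\equiv pr-s \bmod 2p$ step) also leaves finitely many boundary Gaussians whose $\epsilon$-weights must match the regularization \eqref{eq:char}. Unless these points are repaired, the proposal establishes the shape of the answer but not the theorem.
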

Although this result concerns certain characters of modules it relies on another key result for the partial theta function (see Theorem \ref{thm:partialS}).
Interestingly, the previous theorem does not provide us with the usual modular transformation properties one would expect in non-rational theories due to the theta-like 
term $X^\epsilon_{r,s}(\tau)$ that has no obvious interpretation as a regularized character. Surprisingly, the term disappears  for  ${\rm Re}(\epsilon)<0$, which we assume to hold.  By using the above $S$-transformation formulas we define 
 a suitable product on the space of characters by mimicking the  Verlinde formula for the fusion rules. We obtain 
 the following result
\begin{theorem} (Verlinde-type formula) With parametrization of irreps as in Section 4, and 
with multiplication of regularized characters as in (\ref{rig-double}), we have 
\begin{equation*}
\begin{split}
\mathrm{ch}[F_\lambda^\epsilon]\times  \mathrm{ch}[F_\mu^\epsilon] &= \sum_{\ell=0}^{p-1}\mathrm{ch}[F_{\lambda+\mu+\ell\alpha_-}^\epsilon]\\
\mathrm{ch}[M_{r,s}^\epsilon]\times  \mathrm{ch}[F_\mu^\epsilon] &= 
\sum_{\substack{\ell=-s+2\\ \ell+s=0\, \mathrm{mod}\, 2}}^{s}\mathrm{ch}[F_{\mu+\alpha_{r,\ell}}^\epsilon]\\
\mathrm{ch}[M_{r,s}^\epsilon]\times  \mathrm{ch}[M_{r',s'}^\epsilon] &= 
\quad\sum_{\substack{\ell=|s-s'|+1\\ \ell+s+s'=1\, \mathrm{mod}\, 2}}^{min \{ s+s'-1,p \}}\mathrm{ch}[M_{r+r'-1,\ell}^\epsilon] \\
& +\sum_{\substack{\ell=p+1\\ \ell+s+s'=1\, \mathrm{mod}\, 2}}^{s+s'-1}\Bigl(\mathrm{ch}[M_{r+r'-2,\ell-p}^\epsilon]+
\mathrm{ch}[M_{r+r'-1,2p-\ell}^\epsilon]+\mathrm{ch}[M_{r+r',\ell-p}^\epsilon]\Bigr).
\end{split}
\end{equation*}
\end{theorem}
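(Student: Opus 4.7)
The plan is to apply the Verlinde-style product (\ref{rig-double}) directly, which at the level of characters takes the schematic form
\[
\mathrm{ch}[X^\epsilon]\times\mathrm{ch}[Y^\epsilon] = \int_{\mathbb{R}}\frac{S^\epsilon_{X,\mu+\alpha_0/2}\,S^\epsilon_{Y,\mu+\alpha_0/2}}{S^\epsilon_{M_{1,1},\mu+\alpha_0/2}}\,\mathrm{ch}[F^\epsilon_{\mu+\alpha_0/2}]\,d\mu,
\]
and to reduce each of the three fusion statements to an integral whose kernel is either a finite sum of pure exponentials in $\mu$ (so that integration collapses to a finite sum of typical characters) or a finite sum of sine-ratio expressions matching $S^\epsilon_{M_{r,s}}$ term by term. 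A preliminary step is to compute $S^\epsilon_{\lambda,\mu+\alpha_0/2}$ for typical Fock modules by a Gauss integral; this yields a pure exponential linear in $\lambda\mu$ up to $\epsilon$-damping. Together with the atypical $S$-kernel from Theorem 1, this supplies every entry appearing in the product.

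For typical $\times$ typical, the two numerator entries multiply to a single exponential. Division by $S^\epsilon_{M_{1,1},\mu+\alpha_0/2}$ introduces $\sin(\pi\alpha_+(\mu+i\epsilon))/\sin(\pi\alpha_-(\mu+i\epsilon))$, and using $\alpha_+=-p\alpha_-$ together with the Chebyshev identity $\sin(p\theta)/\sin(\theta)=\sum_{\ell=0}^{p-1}e^{i(p-1-2\ell)\theta}$ expands this ratio as a sum of $p$ exponentials. Each exponential shifts the argument of $\mathrm{ch}[F^\epsilon_{\mu+\alpha_0/2}]$ by $\ell\alpha_-$, and the Gaussian integration in $\mu$ collapses to the claimed sum $\sum_{\ell=0}^{p-1}\mathrm{ch}[F^\epsilon_{\lambda+\mu+\ell\alpha_-}]$.

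For atypical $\times$ typical, the ratio $S^\epsilon_{M_{r,s}}/S^\epsilon_{M_{1,1}}$ simplifies to $\sin(\pi s\alpha_-(\mu+i\epsilon))/\sin(\pi\alpha_-(\mu+i\epsilon))$ times an exponential determined by $r$. Applying $\sin(s\theta)/\sin(\theta)=\sum_{k=0}^{s-1}e^{i(s-1-2k)\theta}$ reduces the integrand to a sum of pure exponentials multiplied by $\mathrm{ch}[F^\epsilon_{\mu+\alpha_0/2}]$; the same Gauss integration as in the typical case then produces $\mathrm{ch}[F^\epsilon_{\mu+\alpha_{r,\ell}}]$ summed over $\ell$ of the correct parity and range.

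The atypical $\times$ atypical product is the main obstacle. Here the integrand contains $\sin(\pi s\alpha_-\theta)\sin(\pi s'\alpha_-\theta)/\sin(\pi\alpha_-\theta)$ (with $\theta=\mu+i\epsilon$) multiplied by $r,r'$-dependent exponentials. A product-to-sum identity on the numerator, followed by the $\sin(\ell\theta)/\sin(\theta)$ expansion, yields a finite sum of expressions that formally equal $S^\epsilon_{M_{r+r'-1,\ell}}/S^\epsilon_{M_{1,1}}$ for $|s-s'|+1\leq\ell\leq s+s'-1$ of the correct parity, reproducing the first summand in the theorem. The subtlety is that when $\ell>p$ the label falls outside the allowed range $1\leq s\leq p$; these terms must be rewritten using the quasi-periodicity of $\sin(\pi\ell\alpha_-\theta)$ against $\sin(\pi\alpha_+\theta)=-\sin(\pi p\alpha_-\theta)$, splitting each offending $\ell$-contribution into the three pieces $\mathrm{ch}[M^\epsilon_{r+r'-2,\ell-p}]$, $\mathrm{ch}[M^\epsilon_{r+r'-1,2p-\ell}]$, and $\mathrm{ch}[M^\epsilon_{r+r',\ell-p}]$. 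Keeping track of the exponential prefactors, which shift the first index by $\pm 1$ when one borrows a factor of $e^{\pm i\pi\alpha_+\theta}$ to restore the sine numerator to the canonical range, is the delicate combinatorial step; verifying that the parity constraint $\ell+s+s'\equiv 1\pmod{2}$ propagates correctly through this case split, and that the signs conspire to produce exactly the stated three-term redistribution (which matches the projective-cover structure of atypical $\mathcal{W}(2,2p-1)$-modules beyond the Kac-table boundary), is where the bulk of the work lies.
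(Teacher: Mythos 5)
Your treatment of the first two products (typical $\times$ typical and atypical $\times$ typical) is essentially the paper's computation: expand the sine ratio by the Chebyshev identity into finitely many exponentials, integrate to get delta functions, and collect shifted typical characters. The gap is in the atypical $\times$ atypical case, and it is exactly the step you wave through. The kernel there is (up to exponential prefactors) $\frac{\sin(\pi s\alpha_-\theta)\sin(\pi s'\alpha_-\theta)}{\sin(\pi\alpha_-\theta)\sin(\pi\alpha_+\theta)}$ with $\theta=\rho+i\epsilon$; after the product-to-sum step each term carries the denominator $\sin(\pi\alpha_+\theta)$, so the terms are formally the atypical kernels $S^\epsilon_{M_{r+r'-1,\ell}}$ themselves, \emph{not} $S^\epsilon_{M_{r+r'-1,\ell}}/S^\epsilon_{M_{1,1}}$ as you write (a finite sum of $\sin(\ell x)/\sin(x)$ has no poles at the zeros of $\sin(px)$ and cannot equal the kernel). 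More importantly, "formally equals $S^\epsilon_{M_{c}}$, hence the product contains $\mathrm{ch}[M^\epsilon_c]$" is precisely what has to be proved: the rigorous product (\ref{rig-double}) is defined by integration against \emph{typical} characters, and the $\rho$-integrand with $\sin(\pi\alpha_+\theta)$ in the denominator is not a finite sum of exponentials, so no finite delta-function calculus applies. The paper supplies the missing analytic ingredient: for $\mathrm{Re}(\epsilon)<0$ (equivalently $\mathrm{Im}(x)<0$) one expands $1/\sin(\pi\alpha_+(\rho+i\epsilon))$ as a convergent geometric series, obtaining the identity $-\frac{\sin(sx)\sin(s'x)}{\sin(x)\sin(px)}=\sum_{\ell'\ge 0}e^{-ipx(2\ell'+1)}\sum_{\ell}(e^{-ix\ell}-e^{ix\ell})$; this turns the fusion kernel into an \emph{infinite} sum of exponentials, hence an infinite sum of delta functions, hence an infinite sum of typical characters, which is then resummed into atypical characters using their defining expansion (\ref{eq:char}) (justified by the Fubini argument of Section 4.1). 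Without this expansion your identification of kernel terms with atypical characters presupposes an unproved "inverse $S$-transform" statement for atypicals, and you also silently use Theorem 1 with the correction term $X^\epsilon_{r,s}$ dropped, which again is exactly the hypothesis $\mathrm{Re}(\epsilon)<0$ that you never invoke.

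On the $\ell>p$ redistribution: your proposed mechanism (use $\sin(\pi\alpha_+\theta)=-\sin(\pi p\alpha_-\theta)$ and absorb factors $e^{\pm\pi i\alpha_+\theta}$ into the $r$-index) is in fact viable --- one can check the kernel identity $S^\epsilon_{(r'',\ell)}=S^\epsilon_{(r''-1,\ell-p)}+S^\epsilon_{(r'',2p-\ell)}+S^\epsilon_{(r''+1,\ell-p)}$ for $p<\ell\le 2p-1$, which mirrors Proposition \ref{prop:rel} at the kernel level --- but you explicitly leave this "bulk of the work" undone, whereas in the paper it is a one-line application of the virtual-character relation of Proposition \ref{prop:rel}, itself an immediate consequence of (\ref{eq:char}) and $\alpha_{r,s}=\alpha_{r+1,s+p}$. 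So the proposal is sound in outline for two of the three products, but the third rests on an unjustified formal identification and omits the regularization-dependent convergence argument that is the actual content of the theorem's proof.
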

The algebra structure on the integer span of characters given by this Verlinde-type formula we also call
Verlinde algebra (of characters). Notice that the product does not depend on the regularization parameter $\epsilon$. Independence of the choice of this parameter for the final answer is exactly
the requirement for a good regularization in mathematical physics. 

At last, in parallel with the triplet algebra \cite{FGST1,FGST2}, and partially motivated by \cite{KL}, we expect the category of (ordinary) $\singlet$-modules 
to be equivalent to the category of finite-dimensional representations for a certain infinite-dimensional quantum group. This conjecture is amplified with the computation of regularized quantum dimensions in Section 4.3 (see Theorem \ref{Q-thm}). We hope to return to the
problem of identifying the relevant quantum group in our future publications.

David Ridout and Simon Wood have informed us that they are preparing
a manuscript on the Verlinde formula of the $(p,p')$-singlet algebra \cite{RW}. Instead of an analytic approach
they follow the strategy of the previous works on the Verlinde formula, see e.g. \cite{CR4} for an introduction and the example of $\mathcal{W}(2,3)$, of resolving atypical modules in terms of typical ones.  
They find agreement with our Verlinde formula when restricting to the case of $\mathcal{W}(2,2p-1)$. Very recently, in \cite{CMW}, we extended methods of this paper to 
obtain rigorous derivation of results in \cite{RW}.

{\bf Acknowledgements:} We thank K. Bringmann  on some discussions related to this paper. We also 
thank D. Ridout and S. Wood for very useful discussion, suggestions and (S.W.) for pointing out some 
inconsistencies in a previous version of the paper. 
T.C. appreciates that D.R. shared some computations on the singlet algebra. 
T.C. is supported by NSERC Discovery Grant RES0020460.

\section{Modularity of regularized partial theta functions}

In this part we study modular-like transformation properties of a partial theta function, which can be used 
to express more complicated partial and eventually false theta functions. Let 
$$P(u,\tau)=\sum_{k\in\mathbb Z_{\geq 0}+\frac{1}{2}}z^k q^{k^2/2}.$$
where $q=e(\tau)$, $\tau \in \mathbb{H}$, the upper half-plane, and $z=e(u)$, where $u \in \mathbb{C}$. 
This function is obviously holomorphic. If one tries to compute modular transformation properties under $(u, \tau) \mapsto (\frac{u}{\tau},-\frac{1}{\tau})$, some divergent integrals quickly appear. To fix this inconvenience we use a method sometimes used in the physics literature, and in particular in \cite{Fl},  called {\em regularization}. The idea is to deform the "charge"  variable $u$ by introducing an additional parameter denoted by $\epsilon$. On one hand $\epsilon$ can be viewed as the contour deformation parameter, but also as a quantum group parameter (the two seem to be connected). In this paper we do not try to make this connection precise, leaving it for future considerations.

\begin{definition}
{\em Let $\epsilon\in \mathbb C\setminus i\mathbb R$, then the partial regularized theta function is
$$ P_\epsilon(u,\tau)= \sum_{k\in\mathbb Z_{\geq 0}+\frac{1}{2}}z^ke^{2\pi \epsilon k}q^{k^2/2}.$$ }
\end{definition}
The main result of this section will be
\begin{theorem}\label{thm:partialS}
Let $\epsilon\in \mathbb C\setminus i\mathbb R$, then the modular properties of the regularized partial theta function are:
\begin{equation*}
 \begin{split}
&  P_\epsilon(u,\tau+1)=e^{\pi i/4}P_\epsilon(u,\tau), \\
& P_\epsilon((u/\tau,-1/\tau)=\frac{e^{\pi i u^2/\tau}\sqrt{-i\tau}}{2}\Bigl(
-i\int_{\R} \frac{q^{x^2/2}z^{x}}{\sin (\pi(  x+i\epsilon))}\, dx +\frac{1}{2}(\mathrm{sgn}(\mathrm{Re}(\epsilon))+1)\vartheta_{4,\epsilon}(u,\tau)\Bigr),
 \end{split}
\end{equation*}
with the regularized ordinary theta function
$$\vartheta_{4,\epsilon}(u,\tau)= \sum_{n\in\mathbb Z}(-1)^nz^{n-i\epsilon}q^{(n-i\epsilon)^2/2}.$$
\end{theorem}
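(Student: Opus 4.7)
The $T$-transformation is immediate from the series: the map $\tau\mapsto\tau+1$ multiplies $q^{k^2/2}$ by $e^{\pi i k^2}$, and for $k=n+\tfrac12$ with $n\in\Z_{\ge 0}$ one has $k^2=n^2+n+\tfrac14$ with $n^2+n$ even, so every summand picks up the common factor $e^{\pi i/4}$.

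For the $S$-transformation the plan is to start from the integral
\[
I(\epsilon) := \int_{\R} \frac{q^{x^2/2}z^x}{\sin\!\bigl(\pi(x+i\epsilon)\bigr)}\, dx,
\]
which converges absolutely for $\tau\in\mathbb H$ and $\epsilon\in\mathbb C\setminus i\R$ thanks to Gaussian decay of the numerator and the denominator being bounded away from zero on $\R$. I expand $1/\sin(\pi(x+i\epsilon))$ as a geometric series in $e^{\pm 2\pi i(x+i\epsilon)}$, the direction of expansion being dictated by $\mathrm{sgn}(\mathrm{Re}(\epsilon))$ so that the series converges uniformly on $\R$. Interchanging sum and integral (justified by uniform Gaussian decay) and evaluating each Fresnel-type integral via $\int_{\R}e^{\pi i\tau x^2+2\pi i ax}dx=\frac{1}{\sqrt{-i\tau}}e^{-\pi i a^2/\tau}$, the result rearranges, after the substitution $k=m+\tfrac12$, into a sum over either positive or negative half-integers. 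In the regime $\mathrm{Re}(\epsilon)<0$ the reassembled series is exactly $P_\epsilon(u/\tau,-1/\tau)$, and solving for it in terms of $I(\epsilon)$ yields the stated formula with the $\vartheta_{4,\epsilon}$ correction absent (the indicator $\tfrac12(\mathrm{sgn}(\mathrm{Re}(\epsilon))+1)$ vanishes).

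In the regime $\mathrm{Re}(\epsilon)>0$ the reassembled series is instead the \emph{complementary} partial theta $\tilde P_\epsilon(u/\tau,-1/\tau)$ over the negative half-integers. To bring the answer back to $P_\epsilon(u/\tau,-1/\tau)$ I use that the two halves together form a full half-integer theta,
\[
P_\epsilon(u/\tau,-1/\tau)+\tilde P_\epsilon(u/\tau,-1/\tau)=\sum_{k\in\Z+\tfrac12}e^{2\pi\epsilon k+2\pi i uk/\tau-\pi i k^2/\tau},
\]
and apply Jacobi/Poisson inversion to identify the right-hand side with $\sqrt{-i\tau}\,e^{\pi i u^2/\tau}\,\vartheta_{4,\epsilon}(u,\tau)$, where the regularisation shift $-i\epsilon$ of the dual variable comes out of the same Gaussian integral. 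Substituting and solving for $P_\epsilon(u/\tau,-1/\tau)$ reproduces the identity including the $\vartheta_{4,\epsilon}$ term; that term may alternatively be read off as the residue contribution at the poles $x=n-i\epsilon$ of the integrand of $I(\epsilon)$ that are crossed as $\mathrm{Re}(\epsilon)$ passes through zero.

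The main obstacle is combinatorial rather than conceptual: one must track signs and the $\epsilon$-dependent exponential shifts carefully through both the geometric expansion of $1/\sin$ and the Jacobi inversion of the regularised full theta, so that the negative-half sum combines with $P_\epsilon(u/\tau,-1/\tau)$ to reproduce precisely the $\vartheta_{4,\epsilon}(u,\tau)$ of the statement, with its index shift $n-i\epsilon$ and its alternating sign $(-1)^n$. Justifying the interchange of sum and integral and fixing the branch of $\sqrt{-i\tau}$ are routine.
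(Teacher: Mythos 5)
Your proposal is correct in outline, but it proves the theorem by a genuinely different route than the paper. The paper never expands $1/\sin$ at all: it sets $f_\epsilon(u,\tau)=e^{-\pi i u^2/\tau}P_\epsilon(u/\tau,-1/\tau)$ and $h_\epsilon(u,\tau)=-\tfrac{i\sqrt{-i\tau}}{2}\int_{\R}\frac{q^{x^2/2}z^x}{\sin(\pi(x+i\epsilon))}dx$, shows via the elliptic transformation laws of $P_\epsilon$ and the Gauss integral that both satisfy the same pair of functional equations in $u$, and then characterizes the difference $R_\epsilon=f_\epsilon-h_\epsilon$ as a multiple $\alpha_\epsilon(\tau)\vartheta_{4,\epsilon}$ by uniqueness of the holomorphic solution of the homogeneous equations; the $\tau$-dependence $\alpha_\epsilon(\tau)=\alpha_\epsilon\sqrt{-i\tau}$ is fixed by a heat-operator argument, the $\epsilon$-dependence by a contour shift picking up residues (exactly the residue interpretation you mention in passing), and the overall constant by the decomposition $2P_\epsilon=F\vartheta_\epsilon+\vartheta_{2,\epsilon}$ together with the Jacobi transformation of $\vartheta_{2,\epsilon}$. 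Your route — geometric expansion of $1/\sin(\pi(x+i\epsilon))$ in the direction dictated by $\mathrm{sgn}(\mathrm{Re}(\epsilon))$, termwise Gaussian integration, reassembly into a partial theta over positive (resp.\ negative) half-integers, and Poisson/Jacobi inversion of the regularized full theta in the $\mathrm{Re}(\epsilon)>0$ regime — is more elementary and self-contained, and it makes the $\mathrm{sgn}(\mathrm{Re}(\epsilon))$ dichotomy completely transparent; your convergence justifications (denominator bounded away from zero for $\epsilon\notin i\R$, uniform absolute convergence of the expansion, Fubini against the Gaussian) are the right ones. What the paper's longer route buys is the intermediate structure it reuses later: the false-theta transformation (its Proposition on $F\vartheta_\epsilon$) and the identification of the correction as $\alpha_\epsilon\sqrt{-i\tau}\,\vartheta_{4,\epsilon}$.

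One bookkeeping caution: if you carry out your $\mathrm{Re}(\epsilon)>0$ computation to the end, the complementary sum contributes $-\tfrac{\sqrt{-i\tau}e^{\pi iu^2/\tau}}{2}(-iI)$ and the full theta contributes $\sqrt{-i\tau}e^{\pi iu^2/\tau}\vartheta_{4,\epsilon}(u,\tau)$ with coefficient one, so the bracket comes out as $-iI+\bigl(\mathrm{sgn}(\mathrm{Re}(\epsilon))+1\bigr)\vartheta_{4,\epsilon}(u,\tau)$, i.e.\ twice the theta coefficient printed in the statement. The same coefficient results from assembling the paper's own propositions ($P_\epsilon(u/\tau,-1/\tau)=e^{\pi iu^2/\tau}\bigl(h_\epsilon+\alpha_\epsilon\sqrt{-i\tau}\vartheta_{4,\epsilon}\bigr)$ with $\alpha_\epsilon\in\{0,1\}$, while $h_\epsilon$ already carries the factor $\tfrac12$), so do not distort your derivation to match the printed $\tfrac12(\mathrm{sgn}(\mathrm{Re}(\epsilon))+1)$; the discrepancy only affects $\mathrm{Re}(\epsilon)>0$, and the paper subsequently works with $\mathrm{Re}(\epsilon)<0$, where the correction vanishes in either reading.
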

The modular T-transformation ($ \tau \mapsto \tau +1$) is obvious, and the rest of this section will be devoted to proving the modular S-transformation of the theorem.

Let us start with its elliptic transformation properties.
\begin{proposition}\label{prop:ellipepsilon}
The elliptic transformations of $ P_\epsilon$ are
\begin{equation*}
\begin{split}
&  P_\epsilon(u+1,\tau)=- P_\epsilon(u,\tau), \\
&  P_\epsilon(u+\tau,\tau)=z^{-1}q^{-1/2}e^{-2\pi\epsilon} P_\epsilon(u,\tau)-z^{-1/2}q^{-3/8}e^{-\pi\epsilon}\,.
\end{split}
\end{equation*}
\end{proposition}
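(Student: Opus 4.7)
Both identities follow directly from the series definition of $P_\epsilon(u,\tau)$; the regularization factor $e^{2\pi\epsilon k}$ plays no essential role beyond being tracked along with the other $k$-dependent factors. The whole argument is term-by-term manipulation of the sum, so the plan is simply to substitute and reindex.

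For the translation $u\mapsto u+1$, I would substitute into the exponential form $z^k=e^{2\pi i u k}$. Each summand picks up an additional factor $e^{2\pi i k}$, and since $k\in\mathbb{Z}_{\geq 0}+\frac{1}{2}$ is a half-integer, this factor equals $-1$ uniformly across the sum. The resulting series is therefore $-P_\epsilon(u,\tau)$, which gives the first identity immediately.

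For the translation $u\mapsto u+\tau$, each summand acquires an extra factor $q^k$. I would combine this with $q^{k^2/2}$ and complete the square as $q^{k^2/2+k}=q^{(k+1)^2/2-1/2}$, then reindex by $m=k+1$ so that $m$ ranges over $\mathbb{Z}_{\geq 0}+\frac{3}{2}$. Pulling out the prefactor $z^{-1}q^{-1/2}e^{-2\pi\epsilon}$ coming from the shift, what remains inside the sum is precisely $P_\epsilon(u,\tau)$ minus the single missing term at $m=\frac{1}{2}$, namely $z^{1/2}e^{\pi\epsilon}q^{1/8}$. Reassembling the pieces yields the claimed inhomogeneous transformation.

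There is no real obstacle here; the argument is pure bookkeeping. The one point requiring attention is the convention that $z^k=e^{2\pi i u k}$ for half-integer $k$, so that $u\mapsto u+1$ reliably produces the factor $e^{2\pi i k}=-1$ in every term rather than a branch-dependent ambiguity. This is the standard convention used throughout the paper, and once it is fixed the proof is mechanical.
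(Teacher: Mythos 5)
Your proof is correct and is exactly the "straightforward rewriting of the sums" that the paper itself invokes: the shift $u\mapsto u+1$ gives the uniform factor $e^{2\pi i k}=-1$ on half-integers, and the shift $u\mapsto u+\tau$ is handled by completing the square and reindexing, with the missing $m=\tfrac{1}{2}$ term producing the inhomogeneous piece $-z^{-1/2}q^{-3/8}e^{-\pi\epsilon}$. Nothing further is needed.
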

\begin{proof}
A straightforward rewriting of the sums. \eop
\end{proof}
As a corollary, we get
\begin{corollary}\label{cor:ellipepsilon}
Let $\tilde u=u/\tau$ and $\tilde\tau=-1/\tau$, then
\begin{equation*}
\begin{split}
& P_\epsilon((u+1)/\tau,-1/\tau)=\tilde z\tilde q^{-1/2}e^{2\pi\epsilon}P_\epsilon(\tilde u,\tilde \tau)+\tilde z^{1/2}\tilde q^{-3/8}e^{\pi\epsilon}
\\ 
& P_\epsilon((u+\tau)/\tau,-1/\tau)=-P_\epsilon(\tilde u,\tilde \tau)\,.
\end{split}
\end{equation*}
\end{corollary}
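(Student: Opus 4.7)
The plan is purely algebraic: rewrite the arguments $(u+1)/\tau$ and $(u+\tau)/\tau$ in terms of $\tilde u = u/\tau$ and $\tilde\tau = -1/\tau$, and then apply Proposition \ref{prop:ellipepsilon} to the modular-transformed variables $(\tilde u, \tilde\tau)$ in place of $(u,\tau)$. The key observation is the pair of identities
\[
\frac{u+1}{\tau} = \tilde u - \tilde\tau, \qquad \frac{u+\tau}{\tau} = \tilde u + 1,
\]
which convert the two expressions in the corollary into shifts of $\tilde u$ by an integer and by $-\tilde\tau$, respectively. So the corollary is really two instances of the elliptic transformation law, read with tildes.

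First I would handle the second identity, which is immediate: by the first formula of Proposition \ref{prop:ellipepsilon} applied at $(\tilde u, \tilde\tau)$,
\[
P_\epsilon\!\Bigl(\frac{u+\tau}{\tau}, -\frac{1}{\tau}\Bigr) = P_\epsilon(\tilde u + 1, \tilde\tau) = -P_\epsilon(\tilde u, \tilde\tau),
\]
which is exactly the claim.

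Next I would handle the first identity, which requires inverting the $\tau$-shift relation. From the second line of Proposition \ref{prop:ellipepsilon} with $u$ replaced by $u-\tau$ (and then tildes), one solves for $P_\epsilon(\tilde u - \tilde\tau, \tilde\tau)$ in terms of $P_\epsilon(\tilde u, \tilde\tau)$. Concretely, starting from
\[
P_\epsilon(v+\tilde\tau, \tilde\tau) = e(v)^{-1} e(\tilde\tau)^{-1/2} e^{-2\pi\epsilon} P_\epsilon(v,\tilde\tau) - e(v)^{-1/2} e(\tilde\tau)^{-3/8} e^{-\pi\epsilon},
\]
substituting $v = \tilde u - \tilde\tau$ and solving for $P_\epsilon(\tilde u - \tilde\tau,\tilde\tau)$ produces the expected prefactors $\tilde z\tilde q^{-1/2}e^{2\pi\epsilon}$ and $\tilde z^{1/2}\tilde q^{-3/8}e^{\pi\epsilon}$ after collecting powers of $\tilde z$ and $\tilde q$. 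This gives the first claimed identity.

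There is no real obstacle; the only care needed is bookkeeping of the half-integer exponents of $z$ and $q$ (which come from the shift of the summation index $k$ by half-integers in the definition of $P_\epsilon$) and making sure the $\epsilon$-dependent exponentials line up. Because the elliptic transformations in Proposition \ref{prop:ellipepsilon} already encode these factors cleanly, the derivation reduces to a short substitution and simplification.
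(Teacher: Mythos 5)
Your proposal is correct and follows essentially the same route as the paper: rewrite $(u+1)/\tau=\tilde u-\tilde\tau$ and $(u+\tau)/\tau=\tilde u+1$, dispose of the second identity via the $u\mapsto u+1$ law of Proposition \ref{prop:ellipepsilon}, and obtain the first by inverting the $\tau$-shift relation at the shifted argument (the paper writes this as applying the rearranged relation to $u'=\tilde u-\tilde\tau$, which is the same substitution), with the prefactors $\tilde z\tilde q^{-1/2}e^{2\pi\epsilon}$ and $\tilde z^{1/2}\tilde q^{-3/8}e^{\pi\epsilon}$ coming out exactly as you describe.
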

\begin{proof} This follows from Proposition \ref{prop:ellipepsilon} as follows.
Let $\tilde u=u/\tau$, $\tilde\tau=-1/\tau$ and $u'=\tilde u-\tilde\tau$ and correspondingly 
$\tilde z=e^{2\pi i\tilde u},\tilde q=e^{2\pi i\tilde\tau}, z'=e^{2\pi iu'}$, then 
$$ P_\epsilon((u+1)/\tau,-1/\tau)=P_\epsilon(\tilde u-\tilde\tau,\tilde\tau)=P_\epsilon(u',\tilde\tau)$$
and hence with Proposition \ref{prop:ellipepsilon}
$$P_\epsilon(u',\tilde\tau)=z'\tilde q^{1/2}e^{2\pi\epsilon}P_\epsilon(u'+\tilde\tau,\tilde\tau)+z'^{1/2}\tilde q^{1/8}e^{\pi\epsilon}
=\tilde z\tilde q^{-1/2}e^{2\pi\epsilon}P_\epsilon(\tilde u,\tilde\tau)+\tilde z^{1/2}\tilde q^{-3/8}e^{\pi\epsilon}.$$
The second equation is obvious. \eop \end{proof}
\begin{proposition}\label{prop:feps}
Let $\gamma(u,\tau)=e^{-\pi i u^2/\tau}$, then $f_\epsilon(u,\tau)=\gamma(u,\tau)P_\epsilon(\tilde u,\tilde \tau)$ satisfies
\begin{align}
f_\epsilon(u,\tau)-e^{-2\pi\epsilon}f_\epsilon(u+1,\tau)\ &=\ -e^{-\pi\epsilon}\gamma(u+1/2,\tau)\\
f_\epsilon(u,\tau)+zq^{1/2}f_\epsilon(u+\tau,\tau)\ &= \ 0\,.
\end{align}
\end{proposition}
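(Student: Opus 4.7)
The plan is to deduce both identities directly from Corollary \ref{cor:ellipepsilon} by multiplying through by the Gaussian prefactor $\gamma(u,\tau) = e^{-\pi i u^2/\tau}$ and tracking how $\gamma$ transforms under the shifts $u \mapsto u+1$ and $u \mapsto u+\tau$. Since $f_\epsilon$ is built out of $P_\epsilon(\tilde u, \tilde \tau)$ times $\gamma$, the shift formulas for $P_\epsilon((u+1)/\tau, -1/\tau)$ and $P_\epsilon((u+\tau)/\tau, -1/\tau)$ in the corollary translate almost automatically into quasi-periodicity identities for $f_\epsilon$ once the ratios $\gamma(u+1,\tau)/\gamma(u,\tau)$ and $\gamma(u+\tau,\tau)/\gamma(u,\tau)$ are computed.

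For the second identity I would start with $f_\epsilon(u+\tau,\tau) = \gamma(u+\tau,\tau)\,P_\epsilon((u+\tau)/\tau,-1/\tau)$. By Corollary \ref{cor:ellipepsilon} the $P_\epsilon$-factor equals $-P_\epsilon(\tilde u, \tilde\tau)$, and an elementary computation gives $\gamma(u+\tau,\tau)/\gamma(u,\tau) = e^{-\pi i[(u+\tau)^2 - u^2]/\tau} = e^{-2\pi i u - \pi i \tau} = z^{-1} q^{-1/2}$. Thus $f_\epsilon(u+\tau,\tau) = -z^{-1}q^{-1/2} f_\epsilon(u,\tau)$, which rearranges to the stated identity.

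For the first identity I would similarly expand $f_\epsilon(u+1,\tau) = \gamma(u+1,\tau)\,P_\epsilon((u+1)/\tau,-1/\tau)$ using the corollary, producing two summands. The main summand carries a factor $\gamma(u+1,\tau)\,\tilde z\, \tilde q^{-1/2}$, which I expect to collapse to $\gamma(u,\tau)$ after inserting $\tilde z = e^{2\pi i u/\tau}$ and $\tilde q^{-1/2} = e^{\pi i/\tau}$ and simplifying $-(u+1)^2/\tau + (2u+1)/\tau = -u^2/\tau$; this term then contributes $e^{2\pi\epsilon} f_\epsilon(u,\tau)$. The leftover summand has prefactor $\gamma(u+1,\tau)\,\tilde z^{1/2}\, \tilde q^{-3/8}$, which via $-(u+1)^2/\tau + u/\tau + 3/(4\tau) = -(u+1/2)^2/\tau$ reduces exactly to $\gamma(u+1/2,\tau)$. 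Multiplying by $e^{-2\pi\epsilon}$ and rearranging produces the required formula.

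There is no real obstacle here; the content of the proposition is a bookkeeping consequence of Corollary \ref{cor:ellipepsilon} combined with the completion-of-square identity $(u+1)^2 - u - 3/4 = (u+1/2)^2$. The only point requiring mild care is consistency of the fractional powers of $\tilde q$ and $\tilde z^{1/2}$, but since these are the same half-integer shifts that appear in Proposition \ref{prop:ellipepsilon}, the branch choices are already fixed and no sign ambiguity arises.
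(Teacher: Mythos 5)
Your proposal is correct and matches the paper's approach: the paper's proof is exactly "this follows from Corollary \ref{cor:ellipepsilon}," and your computation simply fills in the bookkeeping with the Gaussian prefactor, including the correct simplifications $\gamma(u+1,\tau)\,\tilde z\,\tilde q^{-1/2}=\gamma(u,\tau)$, $\gamma(u+1,\tau)\,\tilde z^{1/2}\,\tilde q^{-3/8}=\gamma(u+1/2,\tau)$, and $\gamma(u+\tau,\tau)/\gamma(u,\tau)=z^{-1}q^{-1/2}$.
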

\begin{proof}
This follows from corollary \ref{cor:ellipepsilon}. \eop
\end{proof}
\begin{lemma}\label{lemma:gauss}
Let $\tilde z=e(u/\tau)$, $\tilde q=e(-1/\tau)$, then we have
$$ \tilde z^k\tilde q^{\frac{k^2}{2}} \ = \ \sqrt{-i\tau}e^{\frac{\pi i u^2}{\tau}}\int_{-\infty}^\infty q^{\frac{w^2}{2}}z^we^{-2\pi i wk}\,dw\,.$$
\end{lemma}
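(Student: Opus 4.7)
The plan is to evaluate the right-hand side as a translated Gaussian integral. First I would write the integrand as a single exponential using $q=e(\tau)$ and $z=e(u)$:
\begin{equation*}
q^{w^2/2} z^w e^{-2\pi i w k} \ =\ \exp\bigl(\pi i \tau w^2 + 2\pi i (u-k) w\bigr).
\end{equation*}
Completing the square in $w$ then gives
\begin{equation*}
\pi i \tau w^2 + 2\pi i (u-k) w \ =\ \pi i \tau \Bigl( w + \frac{u-k}{\tau} \Bigr)^2 - \frac{\pi i (u-k)^2}{\tau}.
\end{equation*}

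Next I would shift the contour of integration by $-(u-k)/\tau$. Because $\mathrm{Im}(\tau)>0$, the factor $\exp(\pi i \tau w^2)$ decays rapidly as $|\mathrm{Re}(w)|\to\infty$ along any horizontal line, and Cauchy's theorem justifies moving the real axis to the translated line even when $u$ and $k$ are complex. After the shift the remaining integral is the classical Gaussian
\begin{equation*}
\int_{-\infty}^\infty e^{\pi i \tau w^2}\, dw \ = \ \int_{-\infty}^\infty e^{-\pi (-i\tau) w^2}\, dw \ = \ \frac{1}{\sqrt{-i\tau}},
\end{equation*}
which converges because $\mathrm{Re}(-i\tau)=\mathrm{Im}(\tau)>0$.

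Assembling the pieces, the right-hand side of the claim becomes
\begin{equation*}
\sqrt{-i\tau}\, e^{\pi i u^2/\tau}\cdot e^{-\pi i (u-k)^2/\tau}\cdot \frac{1}{\sqrt{-i\tau}}\ =\ e^{\pi i(u^2-(u-k)^2)/\tau}\ =\ e^{2\pi i uk/\tau}\, e^{-\pi i k^2/\tau},
\end{equation*}
and recognising $e^{2\pi i uk/\tau}=\tilde z^k$ and $e^{-\pi i k^2/\tau}=\tilde q^{k^2/2}$ from the definitions $\tilde z=e(u/\tau)$, $\tilde q=e(-1/\tau)$ matches the left-hand side exactly. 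The whole argument is routine; the only step requiring any care is the contour shift, which needs the standard tail estimate on the Gaussian, but this presents no real obstacle. The purpose of the lemma in the wider proof is to convert the discrete exponential $\tilde z^k \tilde q^{k^2/2}$ appearing in $P_\epsilon(\tilde u,\tilde\tau)$ into an integral representation in which the $k$-dependence sits in a plane-wave factor $e^{-2\pi i wk}$, so that, together with the elliptic functional equations of Proposition \ref{prop:feps}, the $S$-transformation of $P_\epsilon$ can be read off by summing over $k$ and analysing the resulting integral over $w$.
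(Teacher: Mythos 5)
Your proof is correct and is exactly the computation the paper has in mind: the paper's own proof is the one-line remark ``This is a Gauss integral,'' and your completion of the square, contour shift justified by the decay of $e^{\pi i\tau w^2}$ for $\mathrm{Im}(\tau)>0$, and evaluation $\int_{\mathbb R} e^{-\pi(-i\tau)w^2}\,dw = 1/\sqrt{-i\tau}$ simply write out that standard argument in full. No gaps; the bookkeeping $e^{\pi i(u^2-(u-k)^2)/\tau}=\tilde z^k\tilde q^{k^2/2}$ matches the statement.
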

\begin{proof}
This is a Gauss integral.\eop
\end{proof}
\begin{proposition}\label{prop:heps}
Let
$$ h_\epsilon(u,\tau)\ = \ -\frac{i\sqrt{-i\tau}}{2}\int_{\mathbb R} \frac{q^{x^2/2}z^{x}}{\sin (\pi(  x+i\epsilon))}\, dx,$$
then $h_\epsilon$ satisfies 
\begin{align}
h_\epsilon(u,\tau)-e^{-2\pi\epsilon}h_\epsilon(u+1,\tau)\ &=\ -e^{-\pi\epsilon}\gamma(u+1/2,\tau)\\
h_\epsilon(u,\tau)+zq^{1/2}h_\epsilon(u+\tau,\tau)\ &= \ 0\,.
\end{align}
Also note, that since $\epsilon \notin i\mathbb R$, the integral is absolutely convergent for all $u\in\mathbb C, \tau\in \mathbb H$.
\end{proposition}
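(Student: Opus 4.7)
The plan is to verify both functional equations by direct manipulation of the integral defining $h_\epsilon$, using only two ingredients: the trigonometric identity $1-e^{2\pi i\theta}=-2ie^{\pi i\theta}\sin(\pi\theta)$, and the Gaussian integral recorded in Lemma \ref{lemma:gauss}. No nontrivial contour deformation will be needed: the change of variables in the second identity is a pure translation that leaves the contour on $\mathbb{R}$, and the hypothesis $\epsilon\notin i\mathbb{R}$ keeps the zeros of $\sin(\pi(x+i\epsilon))$ at positive distance from the real line.

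For absolute convergence, I would write $\epsilon=a+ib$ with $a\neq 0$ and observe that $|\sin(\pi(x+i\epsilon))|^2=\sinh^2(\pi a)+\sin^2(\pi(x-b))\geq\sinh^2(\pi a)>0$. Combined with the Gaussian factor $|q^{x^2/2}|=e^{-\pi x^2\,\mathrm{Im}(\tau)}$ and the linear-exponential factor $|z^x|=e^{-2\pi x\,\mathrm{Im}(u)}$, this settles absolute convergence for any $u\in\mathbb{C}$ and $\tau\in\mathbb{H}$.

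For the first identity, I would combine $h_\epsilon(u,\tau)$ and $e^{-2\pi\epsilon}h_\epsilon(u+1,\tau)$ under a single integral. The shift $u\mapsto u+1$ replaces $z^x$ by $z^xe^{2\pi ix}$, so the combined numerator becomes $q^{x^2/2}z^x\bigl(1-e^{2\pi i(x+i\epsilon)}\bigr)$. The trigonometric identity above rewrites the bracket as $-2ie^{\pi i(x+i\epsilon)}\sin(\pi(x+i\epsilon))$, and the $\sin$ in the denominator cancels. What remains is the Gaussian $\int_{\mathbb{R}} q^{x^2/2}e^{2\pi ix(u+1/2)}\,dx$, which by Lemma \ref{lemma:gauss} equals $(-i\tau)^{-1/2}e^{-\pi i(u+1/2)^2/\tau}$; collecting prefactors produces exactly $-e^{-\pi\epsilon}\gamma(u+1/2,\tau)$.

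For the second identity, the shift $u\mapsto u+\tau$ replaces $z^x$ by $z^xq^x$, so $zq^{1/2}h_\epsilon(u+\tau,\tau)$ integrates $q^{(x+1)^2/2}z^{x+1}/\sin(\pi(x+i\epsilon))$. After the translation $y=x+1$ the Gaussian factor returns to $q^{y^2/2}z^y$, while the denominator becomes $\sin(\pi(y-1+i\epsilon))=-\sin(\pi(y+i\epsilon))$; the resulting sign flip yields precisely $-h_\epsilon(u,\tau)$, which is the claim. I do not foresee a serious obstacle: the only real care is bookkeeping for the factor $(-i\tau)^{1/2}$ and the various signs, together with the remark above that $\epsilon\notin i\mathbb{R}$ keeps the poles off the integration contour.
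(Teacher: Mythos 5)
Your proposal is correct and follows essentially the same route as the paper: the first identity is obtained by combining the two integrals so that $1-e^{2\pi i(x+i\epsilon)}$ cancels the sine and the remainder is the Gauss integral of Lemma \ref{lemma:gauss}, and the second follows from the substitution $x\mapsto x+1$. Your added details on sign bookkeeping and on absolute convergence via $|\sin(\pi(x+i\epsilon))|^2=\sin^2(\pi(x-b))+\sinh^2(\pi a)\geq\sinh^2(\pi a)>0$ are accurate and merely flesh out what the paper leaves implicit.
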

\begin{proof}
The left-hand side of the first equation is a Gauss integral, so the equation follows with Lemma \ref{lemma:gauss}.
The second equation follows with the substitution $x\rightarrow x+1$ in the second integral.
\eop\end{proof}
We define the correction 
$$ R_\epsilon=f_\epsilon-h_\epsilon.$$
\begin{proposition}
The correction term is holomorphic and satisfies the following functional equation
\begin{align}
R_\epsilon(u,\tau)-e^{-2\pi\epsilon}R_\epsilon(u+1,\tau)\ &=\ 0\\
R_\epsilon(u,\tau)+zq^{1/2}R_\epsilon(u+\tau,\tau)\ &= \ 0\,.
\end{align}
\end{proposition}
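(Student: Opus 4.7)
The plan is essentially to read off both claims directly from Propositions \ref{prop:feps} and \ref{prop:heps}. Since $R_\epsilon = f_\epsilon - h_\epsilon$ and the two functional equations satisfied by $f_\epsilon$ and $h_\epsilon$ have identical right-hand sides (namely $-e^{-\pi\epsilon}\gamma(u+1/2,\tau)$ in the first case, and $0$ in the second), subtracting them term-by-term kills the inhomogeneous contribution and yields exactly the two stated homogeneous functional equations for $R_\epsilon$. So step one is simply: take the difference of the two pairs of identities.

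The slightly more substantial point is holomorphicity, and this is what I would write out carefully. On the $f_\epsilon$ side: $P_\epsilon(\tilde u, \tilde\tau)$ is a normally convergent power series in $\tilde z = e(u/\tau)$ with holomorphic coefficients in $\tilde\tau = -1/\tau$, so it is holomorphic in $(u,\tau)$ on $\mathbb{C}\times\mathbb{H}$, and the prefactor $\gamma(u,\tau) = e^{-\pi i u^2/\tau}$ is clearly holomorphic there as well. Hence $f_\epsilon$ is holomorphic. On the $h_\epsilon$ side: the integrand $q^{x^2/2} z^x / \sin(\pi(x+i\epsilon))$ is jointly holomorphic in $(u,\tau)$ for each real $x$, and the hypothesis $\epsilon \notin i\mathbb{R}$ (equivalently $\mathrm{Re}(\epsilon)\neq 0$) gives $|\sin(\pi(x+i\epsilon))| \geq |\sinh(\pi\,\mathrm{Re}(\epsilon))| > 0$ uniformly in $x\in\mathbb{R}$. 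Together with the Gaussian factor $q^{x^2/2}$ this yields local uniform absolute convergence of the integral in $(u,\tau)$. By Morera's theorem (or by differentiating under the integral sign, justified by dominated convergence), $h_\epsilon$ is holomorphic in $(u,\tau)$, so $R_\epsilon$ is as well.

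The one mild subtlety worth flagging — and the only place where something could go wrong — is the claim that $f_\epsilon$ and $h_\epsilon$ really do satisfy identities on the same domain so that their difference makes sense pointwise. This is fine because Proposition \ref{prop:feps} holds on all of $\mathbb{C}\times\mathbb{H}$ (it was derived from the elliptic transformations in Corollary \ref{cor:ellipepsilon}, which have no exceptional locus), and Proposition \ref{prop:heps} holds on the same domain thanks to the absolute convergence just mentioned. So no hidden obstacle appears.

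In summary the proof is essentially one line of subtraction plus a short holomorphicity argument; I expect no real obstacle, and no new computation beyond verifying that the two inhomogeneous terms in the first equations of Propositions \ref{prop:feps} and \ref{prop:heps} are literally the same expression, which they are by inspection.
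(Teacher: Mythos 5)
Your proposal is correct and follows the paper's own argument: the paper proves this proposition precisely by subtracting the functional equations of Propositions \ref{prop:feps} and \ref{prop:heps}, so the inhomogeneous terms cancel. Your added holomorphicity details (normal convergence for $f_\epsilon$, the uniform lower bound $|\sin(\pi(x+i\epsilon))|\geq|\sinh(\pi\,\mathrm{Re}(\epsilon))|$ for $h_\epsilon$) are sound and merely spell out what the paper leaves implicit.
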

\begin{proof} Follows from Proposition \ref{prop:feps} and \ref{prop:heps}. \eop \end{proof}
\begin{proposition}
The Jacobi-theta-like function (for fixed $\epsilon\in\mathbb C$)
$$\vartheta_{4,\epsilon}(u,\tau)= \sum_{n\in\mathbb Z}(-1)^nz^{n-i\epsilon}q^{(n-i\epsilon)^2/2}$$
satisfies 
\begin{align}
\vartheta_{4,\epsilon}(u,\tau)-e^{-2\pi\epsilon}\vartheta_{4,\epsilon}(u+1,\tau)\ &=\ 0\\
\vartheta_{4,\epsilon}(u,\tau)+zq^{1/2}\vartheta_{4,\epsilon}(u+\tau,\tau)\ &= \ 0\,
\end{align}
and up to a scalar multiple it is the unique holomorphic function in $u$ with this property.
\end{proposition}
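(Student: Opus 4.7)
The plan is to verify the two quasi-periodicity relations by direct manipulation of the defining series, and then to establish uniqueness by a Fourier-expansion argument that exploits the twist by $\epsilon$.

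For the first relation, I would substitute $u\mapsto u+1$ termwise: the factor $z^{n-i\epsilon}$ picks up $e^{2\pi i(n-i\epsilon)}=e^{2\pi\epsilon}$ while $(-1)^n$ and $q^{(n-i\epsilon)^2/2}$ are unchanged, giving $\vartheta_{4,\epsilon}(u+1,\tau)=e^{2\pi\epsilon}\vartheta_{4,\epsilon}(u,\tau)$, which is exactly the first identity. For the second, substituting $u\mapsto u+\tau$ multiplies $z^{n-i\epsilon}$ by $q^{n-i\epsilon}$; combining this with the Gaussian factor via the identity
$$\tfrac{(n-i\epsilon)^2}{2} + (n-i\epsilon) \;=\; \tfrac{(n+1-i\epsilon)^2}{2} - \tfrac{1}{2},$$
and then reindexing $n\mapsto n-1$ in the resulting sum (so that $(-1)^{n-1}=-(-1)^n$), produces an overall factor $-z^{-1}q^{-1/2}$. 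This yields $zq^{1/2}\vartheta_{4,\epsilon}(u+\tau,\tau)=-\vartheta_{4,\epsilon}(u,\tau)$.

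For uniqueness, let $\phi(u,\tau)$ be any function of $u$ holomorphic on all of $\mathbb{C}$ satisfying both functional equations. The key move is to strip off the $\epsilon$-twist by setting $g(u):=e^{-2\pi\epsilon u}\phi(u,\tau)$, which remains entire in $u$. The first functional equation is then equivalent to $g(u+1)=g(u)$, so $g$ is $1$-periodic and admits a globally convergent Fourier expansion $g(u)=\sum_{n\in\mathbb{Z}}a_n(\tau)z^n$. Substituting back into the second functional equation and comparing coefficients of $z^n$ yields the first-order recursion
$$a_n(\tau) \;=\; -e^{2\pi\epsilon\tau}\, q^{\,n-1/2}\, a_{n-1}(\tau),$$
which determines every $a_n$ from a single choice, say $a_0$. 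Hence $\phi$ is determined up to a multiplicative constant, and $\vartheta_{4,\epsilon}$ itself certifies that this constant can be realized.

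The main subtle point will be keeping the analysis clean in the presence of the non-integer power $z^{-i\epsilon}$: working throughout with the single-valued factor $e^{-2\pi\epsilon u}$ rather than the multi-valued symbol $z^{-i\epsilon}$ is what guarantees that $g$ is actually entire on $\mathbb{C}$, that its Fourier series converges globally, and that the coefficient-matching step is legitimate. Beyond this bookkeeping, no serious difficulty is expected.
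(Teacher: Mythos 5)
Your argument is correct, but it takes a more self-contained route than the paper. You verify both quasi-periodicity relations directly from the defining series (your exponent manipulation $\tfrac{(n-i\epsilon)^2}{2}+(n-i\epsilon)=\tfrac{(n+1-i\epsilon)^2}{2}-\tfrac12$ and the reindexing are exactly right), and you prove uniqueness from scratch: stripping the twist with $g(u)=e^{-2\pi\epsilon u}\phi(u,\tau)$, expanding the resulting $1$-periodic entire function in a Laurent series in $z$, and reading off the first-order recursion $a_n=-e^{2\pi\epsilon\tau}q^{n-1/2}a_{n-1}$, which indeed pins down the solution space to one dimension. The paper instead disposes of the whole proposition by the single observation $\vartheta_{4,\epsilon}(u,\tau)=q^{-\epsilon^2/2}z^{-i\epsilon}\,\vartheta_{4,0}(u-i\epsilon\tau,\tau)$, i.e.\ the twisted function is the classical Jacobi theta $\vartheta_{4,0}$ with shifted argument and an elementary prefactor, so both the functional equations and the uniqueness (one-dimensionality of the space of holomorphic functions with this multiplier system) are inherited from the well-known classical facts. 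The paper's reduction is shorter and makes clear that nothing new is happening beyond a shift $u\mapsto u-i\epsilon\tau$; your proof buys self-containedness and in effect re-derives the classical uniqueness statement (your Fourier-recursion argument is precisely the standard proof of that well-known fact, transplanted to the twisted setting), with the bonus that you are careful to work with the single-valued factor $e^{-2\pi\epsilon u}$ rather than the symbol $z^{-i\epsilon}$, which is exactly the right way to keep $g$ entire and the coefficient comparison legitimate.
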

\begin{proof}
Well-known fact for standard theta-functions, but $q^{-\epsilon^2/2}z^{-i\epsilon}\vartheta_{4,0}(u-i\epsilon\tau,\tau)=\vartheta_{4,\epsilon}(u,\tau)$
can be expressed in terms of the standard Jacobi-theta function $\vartheta_{4,0}(u,\tau)$.\eop
\end{proof}
We define $\alpha_\epsilon(\tau)$ by $R_\epsilon(u,\tau)=\alpha_\epsilon(\tau) \vartheta_{4,\epsilon}(u,\tau)$.
\begin{proposition}\label{prop:taudep}
$\alpha_\epsilon(\tau)=\alpha_\epsilon\sqrt{-i\tau}$
where $\alpha_\epsilon$ is independent of $\tau$.
\end{proposition}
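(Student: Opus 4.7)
The plan is to show that $\alpha_\epsilon(\tau)$ solves a first-order ODE whose solutions are precisely the multiples of $\sqrt{-i\tau}$. The engine is a twisted heat equation for $R_\epsilon$, paired with the standard heat equation for $\vartheta_{4,\epsilon}$.

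First, differentiating its defining series term by term, one checks that
\[
\partial_\tau \vartheta_{4,\epsilon} \;=\; \frac{1}{4\pi i}\,\partial_u^2 \vartheta_{4,\epsilon},
\]
since $\partial_\tau q^{(n-i\epsilon)^2/2}=\pi i(n-i\epsilon)^2 q^{(n-i\epsilon)^2/2}$ matches $\partial_u^2 z^{n-i\epsilon}=-4\pi^2(n-i\epsilon)^2 z^{n-i\epsilon}$ for every $n\in\mathbb Z$.

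Next I would establish the twisted heat equation
\[
\partial_\tau R_\epsilon \;-\; \frac{1}{4\pi i}\,\partial_u^2 R_\epsilon \;=\; \frac{1}{2\tau}\,R_\epsilon.
\]
For $h_\epsilon$, the integrand $q^{x^2/2}z^x/\sin(\pi(x+i\epsilon))$ satisfies the heat equation in $(u,\tau)$ pointwise, hence so does $H_\epsilon(u,\tau):=\int_{\mathbb R} q^{x^2/2}z^x/\sin(\pi(x+i\epsilon))\,dx$ (differentiation under the integral is justified by the absolute convergence noted in Proposition \ref{prop:heps}). Writing $h_\epsilon = c(\tau)H_\epsilon$ with $c(\tau)=-\tfrac{i}{2}\sqrt{-i\tau}$, a one-line chain-rule computation gives $\partial_\tau h_\epsilon - \tfrac{1}{4\pi i}\partial_u^2 h_\epsilon = (c'/c)\,h_\epsilon = \tfrac{1}{2\tau}\,h_\epsilon$. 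For $f_\epsilon = e^{-\pi i u^2/\tau}P_\epsilon(u/\tau,-1/\tau)$, the series $P_\epsilon$ satisfies the heat equation in its arguments, and a direct chain-rule calculation with $\tilde u = u/\tau$, $\tilde\tau=-1/\tau$ together with the Gaussian prefactor $e^{-\pi i u^2/\tau}$ produces exactly the same inhomogeneity $\tfrac{1}{2\tau}f_\epsilon$ (the familiar statement that the weight-$\tfrac12$ metaplectic factor $\sqrt{-i\tau}\,e^{\pi i u^2/\tau}$ intertwines the heat operator with itself up to this correction). Subtracting gives the claim for $R_\epsilon$.

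Finally, I substitute the ansatz $R_\epsilon = \alpha_\epsilon(\tau)\,\vartheta_{4,\epsilon}$ into the twisted heat equation: using the unmodified heat equation for $\vartheta_{4,\epsilon}$ all $u$-derivatives cancel and only
\[
\alpha_\epsilon'(\tau)\,\vartheta_{4,\epsilon}(u,\tau) \;=\; \frac{1}{2\tau}\,\alpha_\epsilon(\tau)\,\vartheta_{4,\epsilon}(u,\tau)
\]
survives. Evaluating at any $u$ where $\vartheta_{4,\epsilon}$ is nonzero and dividing yields $\alpha_\epsilon'(\tau)=\alpha_\epsilon(\tau)/(2\tau)$, whose general solution on the upper half-plane is $\alpha_\epsilon(\tau)=\alpha_\epsilon\sqrt{-i\tau}$ with $\alpha_\epsilon$ independent of $\tau$, as required. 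The main obstacle is the chain-rule verification for $f_\epsilon$: one must carefully collect the contributions from $\partial_\tau$ acting on $e^{-\pi i u^2/\tau}$, from $\partial_\tau$ acting on the composition $P_\epsilon(u/\tau,-1/\tau)$, and from the squared $u$-derivatives, and confirm that after cancellation only the single inhomogeneous term $\tfrac{1}{2\tau}f_\epsilon$ survives, matching the $h_\epsilon$ calculation on the nose so that $R_\epsilon$ obeys the clean twisted heat equation rather than something more complicated.
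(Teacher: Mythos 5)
Your proof is correct and is essentially the paper's own argument: the paper introduces the heat operator $\Delta=\frac{1}{\pi i}\partial_\tau-\frac{1}{(2\pi i)^2}\partial_u^2$ and checks that $f_\epsilon/\sqrt{-i\tau}$, $h_\epsilon/\sqrt{-i\tau}$ and $\vartheta_{4,\epsilon}$ are annihilated by it, which is exactly your twisted heat equation for $R_\epsilon$ (and your chain-rule computation with the Gaussian prefactor, which indeed closes with only the $\frac{1}{2\tau}$ term) combined with the plain heat equation for $\vartheta_{4,\epsilon}$, yielding the same ODE $\alpha_\epsilon'(\tau)=\alpha_\epsilon(\tau)/(2\tau)$ and hence $\alpha_\epsilon(\tau)=\alpha_\epsilon\sqrt{-i\tau}$.
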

\begin{proof}
Define $$ \Delta = \frac{1}{\pi i }\partial_\tau -\frac{1}{(2\pi i)^2}\partial^2_u,$$
then we compute that 
$$\Delta \frac{f_\epsilon(u,\tau)}{\sqrt{-i\tau}}\ = \ \Delta \frac{h_\epsilon(u,\tau)}{\sqrt{-i\tau}}\ = \ \Delta\vartheta_{4,\epsilon}(u,\tau)\ = \ 0$$ 
now, let $\sqrt{-i\tau}\beta_\epsilon(\tau)=\alpha_\epsilon(\tau)$, then the statement implies
$\partial_\tau\beta_\epsilon(\tau)=0$.\eop
\end{proof}
\begin{proposition}\label{prop:epsdep}
There exists $\alpha\in\mathbb C$, such that
\begin{equation*}
\begin{split}
\alpha_\epsilon \ = \ \begin{cases} \alpha &\mathrm{if} \ \mathrm{Re}(\epsilon)>0\\
\alpha-1 &\mathrm{if}\ \mathrm{Re}(\epsilon)<0\end{cases}.
\end{split}
\end{equation*}
\end{proposition}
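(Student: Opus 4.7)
The plan is to change variables in the integral defining $h_\epsilon$ so as to factor out a common exponential which, by a straightforward rearrangement, also appears in $f_\epsilon$ and $\vartheta_{4,\epsilon}$. A single such manipulation will simultaneously deliver both (a) constancy of $\alpha_\epsilon$ on each half-plane $\mathrm{Re}(\epsilon)>0$ and $\mathrm{Re}(\epsilon)<0$, and (b) the jump of $\alpha_\epsilon$ across the imaginary axis.

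Concretely, substitute $y=x+i\epsilon$ in the integrand of $h_\epsilon$ and set $u'=u-i\epsilon\tau$. A direct computation gives
\[
h_\epsilon(u,\tau)=-\frac{i\sqrt{-i\tau}}{2}\,e^{2\pi\epsilon u-\pi i\epsilon^2\tau}\int_{\mathrm{Im}(y)=\mathrm{Re}(\epsilon)}\frac{q^{y^2/2}\,e^{2\pi i y u'}}{\sin(\pi y)}\,dy,
\]
while the identity $P_\epsilon(\tilde u,\tilde\tau)=P_0(\tilde u-i\epsilon,\tilde\tau)$ and the analogous shift for $\vartheta_{4,\epsilon}$ produce
\[
f_\epsilon(u,\tau)=e^{2\pi\epsilon u-\pi i\epsilon^2\tau}f_0(u',\tau),\qquad \vartheta_{4,\epsilon}(u,\tau)=e^{2\pi\epsilon u-\pi i\epsilon^2\tau}\vartheta_{4,0}(u',\tau).
\]
In the rewritten integral for $h_\epsilon$ the only poles are at $y\in\mathbb{Z}$, so by Cauchy's theorem the value of the integral depends on $\epsilon$ only through the sign of $\mathrm{Re}(\epsilon)$ (which decides on which side of the real line the horizontal contour lies). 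Writing $h_0^{\pm}(u',\tau)$ for the two resulting functions and cancelling the common exponential in $R_\epsilon=\alpha_\epsilon\sqrt{-i\tau}\,\vartheta_{4,\epsilon}$ yields
\[
f_0(u',\tau)-h_0^{\pm}(u',\tau)=\alpha_\epsilon\,\sqrt{-i\tau}\,\vartheta_{4,0}(u',\tau),
\]
whose left-hand side depends only on $(u',\tau)$ and on the sign of $\mathrm{Re}(\epsilon)$. Varying $(u,\epsilon)$ while keeping $(u',\tau)$ fixed then forces $\alpha_\epsilon$ to be constant on each half-plane; call those constants $\alpha_+$ and $\alpha_-$.

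To compute $\alpha_+-\alpha_-$, I would close the two horizontal contours into a clockwise rectangle whose interior contains exactly the poles $y=n\in\mathbb{Z}$; the Gaussian $q^{y^2/2}$ ensures that the vertical sides vanish in the limit. The residue at $y=n$ is $(-1)^n q^{n^2/2}e^{2\pi i n u'}/\pi$, whose sum is $\vartheta_{4,0}(u',\tau)/\pi$, so the residue theorem gives $h_0^{+}-h_0^{-}=-\sqrt{-i\tau}\,\vartheta_{4,0}(u',\tau)$. Since $f_\epsilon$ is holomorphic in $\epsilon$ on all of $\mathbb{C}$ and contributes no jump to $R_\epsilon$, this upgrades to $\alpha_+-\alpha_-=1$; setting $\alpha:=\alpha_+$ reproduces the claimed formula. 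The main place to exercise care is the bookkeeping of orientations and signs in the contour-deformation step; once that is done, everything else is formal.
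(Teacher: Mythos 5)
Your proposal is correct and follows essentially the same route as the paper: the paper uses the same shift-covariance identities for $f_\epsilon$ and $\vartheta_{4,\epsilon}$ (with a general shift $\epsilon\mapsto\epsilon-i\mu$, $u\mapsto u+\mu\tau$; your substitution is the normalized case $\mu=-i\epsilon$, keeping the contour at height $\mathrm{Re}(\epsilon)$ so as to stay off the poles), and it likewise computes the discrepancy in $h_\epsilon$ by deforming the contour across the poles of $1/\sin$ and summing residues, which reproduces the $\vartheta_4$-series and yields the jump $\alpha_+-\alpha_-=1$ across $\mathrm{Re}(\epsilon)=0$. Your sign bookkeeping in the residue step checks out, so no gaps.
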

\begin{proof}
Let $\mu\in\C$ with $\text{Re}(\epsilon)\neq -\text{Im}(\mu)$, then rewriting the appropriate sums yields
$$ z^\mu q^{\mu^2/2}\vartheta_{4,\epsilon-i\mu}(u+\mu\tau,\tau)=\vartheta_{4,\epsilon}(u,\tau)\qquad,\qquad 
z^\mu q^{\mu^2/2}f_{\epsilon-i\mu}(u+\mu\tau,\tau)=f_{\epsilon}(u,\tau).$$
With the substitution $y=x+\mu$, we get
$$z^\mu q^{\mu^2/2}h_{\epsilon-i\mu}(u+\mu\tau,\tau)= -\frac{i\sqrt{-i\tau}}{2}\int_{\mathbb R} \frac{q^{x^2/2+\mu x+\mu^2/2}z^{x+\mu}}{\sin (\pi(  x+\mu+i\epsilon))}dx= -\frac{i\sqrt{-i\tau}}{2}\int_{\mathbb R+\mu} \frac{q^{y^2/2}z^{y}}{\sin (\pi(  y+i\epsilon))}dy.$$
Since $\tau$ is in the upper half plane, we can thus write the difference $h_{\epsilon}(u,\tau)-z^\mu q^{\mu^2/2}h_{\epsilon-i\mu}(u+\mu\tau,\tau)$ as a contour integral over the contour $\mathcal C_\mu$ which connects the two areas of integration of the previous equation,
\begin{equation*}
\begin{split}
h_{\epsilon}(u,\tau)-z^\mu q^{\mu^2/2}h_{\epsilon-i\mu}(u+\mu\tau,\tau)\, &=\,
-\frac{i\sqrt{-i\tau}}{2}\int_{\mathcal C_\mu} \frac{q^{x^2/2}z^{x}}{\sin (\pi(  x+i\epsilon))}\, dx.
\end{split}
\end{equation*}
Let $\epsilon'=\epsilon-i\mu$, we can then evaluate the integral by the residuum theorem and get
\begin{equation*}
\begin{split}
-\frac{i\sqrt{-i\tau}}{2}\int_{\mathcal C_\mu} \frac{q^{x^2/2}z^{x}}{\sin (\pi(  x+i\epsilon))}\, dx\, &=\, \sqrt{-i\tau}\sum_{n\in\mathbb Z}(-1)^nz^{n-i\epsilon}q^{(n-i\epsilon)^2/2} \begin{cases}
0 &\mathrm{if}\ \mathrm{Re}(\epsilon)>0\ , \ \mathrm{Re}(\epsilon')>0\\
1 &\mathrm{if}\ \mathrm{Re}(\epsilon)>0\ , \ \mathrm{Re}(\epsilon')<0\\
-1 &\mathrm{if}\ \mathrm{Re}(\epsilon)<0\ , \ \mathrm{Re}(\epsilon')>0\\
0 &\mathrm{if}\ \mathrm{Re}(\epsilon)<0\ , \ \mathrm{Re}(\epsilon')<0\\
\end{cases}\\
&=\, \sqrt{-i\tau}\vartheta_{4,\epsilon}(u,\tau) \begin{cases}
0 &\mathrm{if}\ \mathrm{Re}(\epsilon)>0\ , \ \mathrm{Re}(\epsilon')>0\\
1 &\mathrm{if}\ \mathrm{Re}(\epsilon)>0\ , \ \mathrm{Re}(\epsilon')<0\\
-1 &\mathrm{if}\ \mathrm{Re}(\epsilon)<0\ , \ \mathrm{Re}(\epsilon')>0\\
0 &\mathrm{if}\ \mathrm{Re}(\epsilon)<0\ , \ \mathrm{Re}(\epsilon')<0\\
\end{cases}
\end{split}
\end{equation*}
but this implies 
\begin{equation*}
\begin{split}
\alpha_\epsilon \ = \ \begin{cases} \alpha &\mathrm{if} \ \mathrm{Re}(\epsilon)>0\\
\alpha-1 &\mathrm{if}\ \mathrm{Re}(\epsilon)<0\end{cases}
\end{split}
\end{equation*}
for some $\alpha\in\C$.\eop
\end{proof}
It remains to compute $\alpha$. 
\begin{definition}
{\em We define the regularized false theta function
$$F\vartheta_\epsilon(u,\tau)=P_\epsilon(u,\tau)-P_{-\epsilon}(-u,\tau).$$ }
\end{definition}
\begin{proposition}\label{prop:falseS}
The modular S-transformation of the regularized false theta function is
$$ F\vartheta_\epsilon(u/\tau,-1/\tau)=e^{\pi i u^2/\tau}\sqrt{-i\tau}\Bigl(
-i\int_{\R} \frac{q^{x^2/2}z^{x}}{\sin (\pi(  x+i\epsilon))}\, dx +\mathrm{sgn}(\mathrm{Re}(\epsilon))\vartheta_{4,\epsilon}(u,\tau)\Bigr)
$$
\end{proposition}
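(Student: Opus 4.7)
The plan is to reduce Proposition~\ref{prop:falseS} to two applications of Theorem~\ref{thm:partialS}, exploiting the definition $F\vartheta_\epsilon(u,\tau)=P_\epsilon(u,\tau)-P_{-\epsilon}(-u,\tau)$. The convergence hypothesis is preserved under $\epsilon\mapsto -\epsilon$ (still not in $i\R$), so both regularized partial thetas fall under Theorem~\ref{thm:partialS}. First I would write out $P_\epsilon(u/\tau,-1/\tau)$ directly from that theorem, and then obtain $P_{-\epsilon}(-u/\tau,-1/\tau)$ by substituting $u\mapsto -u$ and $\epsilon\mapsto -\epsilon$ in the same formula, noting that the quadratic prefactor $e^{\pi i u^2/\tau}$ and the factor $\sqrt{-i\tau}$ are unchanged.

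The heart of the argument is then to recognize two symmetries that let one re-express $P_{-\epsilon}(-u/\tau,-1/\tau)$ in terms of the same integral and the same $\vartheta_{4,\epsilon}(u,\tau)$ appearing in $P_\epsilon(u/\tau,-1/\tau)$. First, for the integral I would change variables $x\mapsto -x$ and use that $\sin$ is odd to obtain
\[
\int_{\R}\frac{q^{x^2/2}z^{-x}}{\sin(\pi(x-i\epsilon))}\,dx \;=\; -\int_{\R}\frac{q^{x^2/2}z^{x}}{\sin(\pi(x+i\epsilon))}\,dx;
\]
this is a real-to-real substitution, so there is no contour issue and no poles are crossed. Second, for the theta-like term, relabeling $n\mapsto -n$ in the series defining $\vartheta_{4,\epsilon}$ gives the reflection identity $\vartheta_{4,-\epsilon}(-u,\tau)=\vartheta_{4,\epsilon}(u,\tau)$ directly.

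With these two identities in hand, subtracting $P_{-\epsilon}(-u/\tau,-1/\tau)$ from $P_\epsilon(u/\tau,-1/\tau)$ combines the integrals additively (giving the single integral in the claim with coefficient $-i$) and the two sign-function contributions into $\mathrm{sgn}(\mathrm{Re}(\epsilon))\vartheta_{4,\epsilon}(u,\tau)$, using $\mathrm{sgn}(\mathrm{Re}(-\epsilon))=-\mathrm{sgn}(\mathrm{Re}(\epsilon))$ and simple arithmetic on the $\tfrac{1}{2}(\pm\mathrm{sgn}+1)$ coefficients appearing in Theorem~\ref{thm:partialS}. The final step is to collect the overall $e^{\pi iu^2/\tau}\sqrt{-i\tau}$ prefactor.

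There is no genuine analytic obstacle here beyond Theorem~\ref{thm:partialS} itself; the main care is bookkeeping the signs. The most error-prone point is the simultaneous transformation $(u,\epsilon)\mapsto(-u,-\epsilon)$, where one must track that $z\mapsto z^{-1}$, that $\vartheta_{4,-\epsilon}(-u,\tau)$ returns to $\vartheta_{4,\epsilon}(u,\tau)$, and that the sine denominator changes sign in the appropriate way under $x\mapsto -x$. Once these are handled correctly, the proposition follows by direct substitution.
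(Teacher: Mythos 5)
Your symmetry identities are exactly the right ingredients --- the substitution $x\mapsto -x$ giving $h_{-\epsilon}(-u,\tau)=-h_\epsilon(u,\tau)$, the relabeling $n\mapsto -n$ giving $\vartheta_{4,-\epsilon}(-u,\tau)=\vartheta_{4,\epsilon}(u,\tau)$, and $\mathrm{sgn}(\mathrm{Re}(-\epsilon))=-\mathrm{sgn}(\mathrm{Re}(\epsilon))$ --- but the overall strategy is circular. Proposition~\ref{prop:falseS} is itself a step in the proof of Theorem~\ref{thm:partialS}: at this point in the paper the constant $\alpha$ of Proposition~\ref{prop:epsdep} is still undetermined, and it is precisely Proposition~\ref{prop:falseS}, combined with $2P_\epsilon=F\vartheta_\epsilon+\vartheta_{2,\epsilon}$ and the $S$-transformation of $\vartheta_{2,\epsilon}$, that forces $\alpha=1$ and thereby completes the proof of the theorem. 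So you may not quote Theorem~\ref{thm:partialS} here. The paper instead works one level down, with $\gamma(u,\tau)P_\epsilon(u/\tau,-1/\tau)=f_\epsilon(u,\tau)=h_\epsilon(u,\tau)+\alpha_\epsilon\sqrt{-i\tau}\,\vartheta_{4,\epsilon}(u,\tau)$ (Propositions~\ref{prop:heps}, \ref{prop:taudep}, \ref{prop:epsdep}), and uses the fact that in the difference $f_\epsilon(u,\tau)-f_{-\epsilon}(-u,\tau)$ the unknown $\alpha$ cancels, since $\alpha_\epsilon-\alpha_{-\epsilon}=\mathrm{sgn}(\mathrm{Re}(\epsilon))$; your two symmetries then give $2h_\epsilon+\mathrm{sgn}(\mathrm{Re}(\epsilon))\sqrt{-i\tau}\,\vartheta_{4,\epsilon}$, which is the claim. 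Your argument becomes correct (and essentially identical to the paper's) once every appeal to Theorem~\ref{thm:partialS} is replaced by an appeal to this decomposition with $\alpha$ left undetermined.

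There is also a concrete numerical problem if one takes the printed statement of Theorem~\ref{thm:partialS} at face value, as you do. With prefactor $\tfrac{1}{2}e^{\pi iu^2/\tau}\sqrt{-i\tau}$ and inner theta coefficients $\tfrac12(\pm\mathrm{sgn}(\mathrm{Re}(\epsilon))+1)$, the subtraction yields $e^{\pi iu^2/\tau}\sqrt{-i\tau}\bigl(-i\int_{\R}\frac{q^{x^2/2}z^x}{\sin(\pi(x+i\epsilon))}dx+\tfrac12\mathrm{sgn}(\mathrm{Re}(\epsilon))\,\vartheta_{4,\epsilon}(u,\tau)\bigr)$: the inner difference $\tfrac12(s+1)-\tfrac12(1-s)=s$ that you invoke is still multiplied by the outer $\tfrac12$, while the integral terms combine to $-2i$ times the integral and absorb that $\tfrac12$. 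So your route does not reproduce the stated coefficient $\mathrm{sgn}(\mathrm{Re}(\epsilon))$. The mismatch is in fact a factor-of-two inconsistency between the printed theorem and the rest of the section: consistency with $f_\epsilon=h_\epsilon+\alpha_\epsilon\sqrt{-i\tau}\,\vartheta_{4,\epsilon}$ and $\alpha=1$ requires the inner coefficient in the theorem to be $(\mathrm{sgn}(\mathrm{Re}(\epsilon))+1)$, and Proposition~\ref{prop:falseS} as stated is the internally consistent one. This is a further reason the proposition must be proved from the intermediate objects $h_\epsilon$ and $\alpha_\epsilon$ rather than derived from the theorem's statement.
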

\begin{proof}
This follows directly from $h_{-\epsilon}(-u,\tau)=-h_\epsilon(u,\tau)$, $\vartheta_{4,-\epsilon}(-u,\tau)=\vartheta_{4,\epsilon}(u,\tau)$, $\alpha_\epsilon-\alpha_{-\epsilon}=\mathrm{sgn}(\mathrm{Re}(\epsilon))$
and the expression for $\alpha_\epsilon(\tau)$ that is Proposition \ref{prop:taudep} and \ref{prop:epsdep}.\eop
\end{proof}
\begin{proposition}
$\alpha=1$
\end{proposition}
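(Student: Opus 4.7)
The plan is to pin down $\alpha$ by computing $R_\epsilon$ in closed form for a single $\epsilon$ with $\mathrm{Re}(\epsilon)>0$; by Proposition~\ref{prop:epsdep} this is enough. The goal is to show directly that $R_\epsilon(u,\tau)=\sqrt{-i\tau}\,\vartheta_{4,\epsilon}(u,\tau)$, so that Proposition~\ref{prop:taudep} forces $\alpha_\epsilon=1$ and hence $\alpha=1$.

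The first step is to replace the integral defining $h_\epsilon$ by a series. For $\mathrm{Re}(\epsilon)>0$ the geometric expansion
\[
\frac{1}{\sin\bigl(\pi(x+i\epsilon)\bigr)}\ =\ -2i\sum_{n\ge 0}e^{i\pi(2n+1)(x+i\epsilon)}
\]
converges uniformly in $x\in\mathbb R$. Substituting it into the definition of $h_\epsilon$ and swapping sum and integral (justified by the Gaussian factor $q^{x^2/2}$) reduces each summand to a Gauss integral, which Lemma~\ref{lemma:gauss} identifies with a multiple of $e^{-\pi\epsilon(2n+1)}e^{-\pi i(u+n+\frac{1}{2})^2/\tau}$. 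In this way $h_\epsilon$ becomes an explicit half-lattice sum indexed by $n\ge 0$.

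Independently, expanding $f_\epsilon(u,\tau)=e^{-\pi iu^2/\tau}P_\epsilon(u/\tau,-1/\tau)$ straight from the definition of $P_\epsilon$ yields another half-lattice sum whose summand is $e^{2\pi\epsilon(n+\frac{1}{2})}e^{-\pi i(u-n-\frac{1}{2})^2/\tau}$, again indexed by $n\ge 0$. Reindexing the $h_\epsilon$-sum by $n\mapsto -n-1$ converts it into a sum over $n\le -1$ with the same type of summand, so subtracting collapses everything into a single full lattice sum
\[
R_\epsilon(u,\tau)\ =\ \sum_{m\in\mathbb Z}e^{2\pi\epsilon(m+\frac{1}{2})}\,e^{-\pi i(u-m-\frac{1}{2})^2/\tau}.
\]

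The final step is Poisson summation: completing the square in $m$ and applying Lemma~\ref{lemma:gauss} term by term Fourier-transforms this Gaussian lattice sum into exactly $\sqrt{-i\tau}$ times the series defining $\vartheta_{4,\epsilon}(u,\tau)$. Combined with $R_\epsilon=\alpha_\epsilon(\tau)\vartheta_{4,\epsilon}$ and Proposition~\ref{prop:taudep}, this forces $\alpha_\epsilon=1$ throughout the half-plane $\mathrm{Re}(\epsilon)>0$, and the conclusion $\alpha=1$ then follows from Proposition~\ref{prop:epsdep}. The only genuinely delicate point in the argument is justifying the term-wise integration in the expansion of $1/\sin$; this is routine given $\mathrm{Re}(\epsilon)>0$, which supplies the uniform tail decay needed to apply Fubini, while the Poisson step itself is a standard Gaussian computation. \eop
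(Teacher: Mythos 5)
Your proposal is correct, and it reaches $\alpha=1$ by a genuinely different route than the paper. The paper's proof is indirect: it introduces the regularized full Jacobi theta $\vartheta_{2,\epsilon}(u,\tau)=\sum_{k\in\mathbb Z+\frac{1}{2}}z^ke^{2\pi\epsilon k}q^{k^2/2}$, observes the decomposition $2P_\epsilon=F\vartheta_\epsilon+\vartheta_{2,\epsilon}$, and combines the $S$-transformation of the regularized false theta (Proposition \ref{prop:falseS}, which only needed the difference $\alpha_\epsilon-\alpha_{-\epsilon}=\mathrm{sgn}(\mathrm{Re}(\epsilon))$ together with the symmetries $h_{-\epsilon}(-u,\tau)=-h_\epsilon(u,\tau)$, $\vartheta_{4,-\epsilon}(-u,\tau)=\vartheta_{4,\epsilon}(u,\tau)$) with the classical identity $\vartheta_{2,\epsilon}(u/\tau,-1/\tau)=e^{\pi iu^2/\tau}\sqrt{-i\tau}\,\vartheta_{4,\epsilon}(u,\tau)$ to solve for $\alpha$. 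You instead fix $\mathrm{Re}(\epsilon)>0$ and evaluate $R_\epsilon=f_\epsilon-h_\epsilon$ in closed form: the geometric expansion of $1/\sin(\pi(x+i\epsilon))$ is valid precisely on that half-plane, the termwise Gauss integrals are legitimate by the Gaussian damping (the same Fubini device the paper uses later in Section 4.1), and the two half-lattice sums indeed recombine into $\sum_{m\in\Z}e^{2\pi\epsilon(m+\frac{1}{2})}e^{-\pi i(u-m-\frac{1}{2})^2/\tau}=\gamma(u,\tau)\,\vartheta_{2,\epsilon}(u/\tau,-1/\tau)$; your concluding Poisson-summation step is then exactly a proof of the $\vartheta_{2,\epsilon}$-transformation that the paper dismisses as ``a short computation'' (do note that Lemma \ref{lemma:gauss} applied termwise only supplies the Fourier transform of each Gaussian — the actual Poisson summation formula is what converts the lattice sum). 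So both arguments ultimately rest on the same classical theta identity; what differs is how $P_\epsilon$ is tied to it. The paper's symmetrization through $F\vartheta_\epsilon$ avoids expanding the integrand and recycles earlier propositions, while your computation is more self-contained: it produces $R_\epsilon=\sqrt{-i\tau}\,\vartheta_{4,\epsilon}$ outright (so Proposition \ref{prop:taudep} becomes redundant for this step), and repeating it with the opposite geometric expansion for $\mathrm{Re}(\epsilon)<0$ gives $R_\epsilon=0$, independently confirming Proposition \ref{prop:epsdep}. Both proofs are sound.
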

\begin{proof}
Define $\vartheta_{2,\epsilon}(u,\tau)= \sum_{k\in\mathbb Z+\frac{1}{2}}z^ke^{2\pi \epsilon k}q^{k^2/2}$.
Then a short computation gives 
$$ \vartheta_{2,\epsilon}(u/\tau,-1/\tau)=e^{\pi iu^2/\tau}\sqrt{-i\tau}\vartheta_{4,\epsilon}(u,\tau).$$
The partital regularized theta function satisfies $2P_{\epsilon}(u,\tau)=F\vartheta_\epsilon(u,\tau)+\vartheta_{2,\epsilon}(u,\tau)$ and hence with Proposition \ref{prop:falseS} and above equation the claim follows.\eop
\end{proof}
This proposition completes the proof of Theorem \ref{thm:partialS}.

\begin{remark} 
{\em Clearly,
\[ P_{a,b}(u,\tau)= \sum_{n=0}^\infty z^{n+\frac{b}{2a}} q^{a(n+\frac{b}{2a})^2}=
z^{\frac{b}{2a}-\frac{1}{2}}q^{a(\frac{b}{2a}-\frac{1}{2})^2}P(u+(b-a)\tau,2a\tau)\]
so the previous theorem solves the problem of finding the modular-like transformations of the non-generic
family of regularized singlet algebra characters.}
\end{remark}

\section{The singlet vertex algebra $\mathcal{W}(2,2p-1)$}

Let  $$\hat{\mathfrak{h}}=\mathfrak{h} \otimes \mathbb{C}[t,t^{-1}]+\mathbb{C} k.$$ 
denote the rank one Heisenberg Lie algebra. We choose its generators 
to be $\varphi(n)$, $n \in \mathbb{Z}$, such that $[\varphi(n),\varphi(m)]=\delta_{m+n,0} k$. 
Denote by ${F}_{\lambda} \cong U(\hat{\mathfrak{h}}_-)$, $\lambda \in \mathbb{C}$,  the usual Fock space of charge 
$\lambda$, so that $\varphi(0) \cdot e^{\lambda \varphi} =\lambda e^{\lambda \varphi}$, 
where $e^{\lambda \varphi}$ is a lowest weight vector for ${F}_{\lambda}$. We fix the level to be one, i.e. 
the central element $k$ will act by multiplication
with one on all Fock spaces. For every $p \geq 2$, we choose the conformal vector to be
$$\omega=\frac{1}{2} \varphi(-1)^2{\bf 1}+\frac{p-1}{\sqrt{2p}}\varphi(-2){\bf 1} \in {F}_0.$$
This equips $F_0$ with a VOA structure of central charge $c_{p,1}=1-6\frac{(p-1)^2}{p}$.
We shall be using standard parametrization of $(1,p)$ lowest weight logarithmic minimal models following 
$$h_{m,n}=\frac{(np-m)^2-(p-1)^2}{4p}.$$
In this parametrization we may assume $1 \leq m \leq p$ and $n \in \mathbb{Z}$.


Next, we introduce the singlet vertex algebra after Kausch \cite{Ka}, where we prefer to follow the approach from \cite{A}, \cite{AdM1} and \cite{AdM3} (see also \cite{FHST}, \cite{FGST1}, \cite{FGST2}). 

\subsection{Definition of $\mathcal{W}(2,2p-1)$}
Again, here $p \in \mathbb{N}_{\geq 2}$. Denote by 
$$V_L=\bigoplus_{\lambda \in \sqrt{2 p}  \mathbb{Z}} {F}_{\lambda} $$ the lattice 
vertex algebra associated to rank one even lattice $\sqrt{2 p}  \mathbb{Z}$ \cite{AdM3} (see also \cite{LL}).
Consider its dual lattice 
$\widetilde{L} = {\mathbb{Z}}(\frac{1}{\sqrt{2 p}})$. Then we have a generalized vertex algebra structure
on  
$$V_{ \widetilde{L}}=\bigoplus_{\lambda \in \frac{1 }{\sqrt{2p}} \mathbb{Z}} {F}_{\lambda}=\bigoplus_{i=0}^{2p-1} V_{L+\frac{i}{\sqrt{2p}}}.$$
The vertex algebra $V_L$ is then a
vertex subalgebra of $V_{ \widetilde{L}}$.

The element $L(0)$ of the Virasoro algebra defines a $
{\mathbb{N}}$--gradation on $V_L$. As
in \cite{Ka} define the following {\em long} and {\em short} screening operators

$$Q= e^{ \sqrt{2p} \varphi} _0, \qquad \widetilde{Q} =e^{-\sqrt{\frac{2}{p}} \varphi}_0,$$ 
respectively, where
we use 
$$e^{\gamma}(x)=\sum_{n \in \mathbb{Z}} e^{\gamma}_n x^{-n-1},$$
the Fourier expansion of $e^{\gamma}$. Then we  have 
$$ [Q,\widetilde{Q}] =0, \ \ [L(n), Q] = [L(n), \widetilde{Q}] = 0
\ \ (n \in {\Z}).$$
Thus, the operators $Q$ and $\widetilde{Q}$ are intertwinners 
among Virasoro algebra
modules. In fact, the Virasoro vertex operator algebra
$L(c_{p,1},0) \subset \mathcal{F}_0$ is the kernel of the screening
operator $Q$. Define
$$\mathcal{W}(2,2p-1) = \mbox{Ker}_{{F}_0 } \widetilde{Q},$$
called the {\em singlet vertex algebra}.
\noindent Since $ \widetilde{Q}$ commutes with the action of the Virasoro
algebra, we have
$$L(c_{p,1},0) \subset  \mathcal{W}(2,2p-1). $$
%


The vertex operator algebra  $\mathcal{W}(2,2p-1) $ is completely
reducible as a Virasoro algebra module and the following decomposition
holds:
\begin{equation}
\singlet= \bigoplus_{n =0} ^{\infty} U(Vir).\ u^{(n)}  =
\bigoplus_{n =0} ^{\infty} L(c_{p,1}, n^{2} p + n p - n),\nonumber
\end{equation}
where \begin{equation} \label{un}
 u ^{(n)} = Q^{n} e^{-n \sqrt{2p} \varphi}.
\end{equation}
In addition,  $ \mathcal{W}(2,2p-1)$ is strongly generated by
$\omega$ and the primary vector \be \label{H} H= Q e^{-\sqrt{2p} \varphi} \ee of conformal
weight $2p-1$.

\subsection{Irreducible $\mathcal{W}(2,2p-1)$-modules}

Complete classification of all (weak) irreducible $\mathcal{W}(2,2p-1)$-modules is presently unknown.  
On the other hand, $\mathbb{Z}_{\geq 0}$-gradable irreducible modules were classified in \cite{A} (see also \cite{AdM1} for some additional details). From now on we consider finitely generated $\mathbb{Z}_{\geq 0}$-gradable ordinary modules  whose characters are well-defined.
%
We do not consider logarithmic modules in this work.
First we distinguish between typical and atypical modules.
\begin{definition} {\em An irreducible ($\mathbb{Z}_{\geq 0}$-graded)  $\mathcal{W}(2,2p-1)$-module is called {\em typical} if it remains irreducible as a Virasoro module, and {\em atypical} otherwise.}
\end{definition}

We denote by $\text{ch}[X](\tau)$ the usual character of $X$, the trace of $q^{L(0)-c/24}$.
Generic characters are easily computed. Denote by $F_{\lambda}$ the Fock space of charge $\lambda$ as in the previous section.
Then clearly, 
$${\rm ch}[F_{\lambda}](\tau)=\frac{q^{h_{\lambda}-c_{p,1}/24}}{(q;q)_\infty}=\frac{q^{(\lambda-\alpha_0/2)^2/2}}{\eta(\tau)}.$$
We also observe the symmetry
$${\rm ch}[F_{\lambda}](\tau)={\rm ch}[F_{\alpha_0-\lambda}](\tau).$$

By using results from \cite{A} and  \cite{AdM1}, we easily infer that all atypical irreducible $\mathcal{W}(2,2p-1)$-modules can be constructed as subquotients of ${F}_{\lambda}$, where $\lambda \in \sqrt{2p} \mathbb{Z}+\frac{i}{\sqrt{2p}}$, $0 \leq i \leq 2p-1$ (the dual lattice $\tilde{L}$). Every such Fock space yields a unique irreducible $\mathcal{W}(2,2p-1)$-module.
To see this, we shall first slightly adjust the parametrization of $\tilde{L}$.
Let $\alpha_+=\sqrt{2p}$, $\alpha_-=-\sqrt{2/p}$ and let also $$\alpha_0=\alpha_++\alpha_-.$$
Further let 
$$ \alpha_{r,s}=-\frac{1}{2}(r\alpha_++s\alpha_--\alpha_0)=-\frac{r-1}{2} \sqrt{2p}+\frac{s-1}{\sqrt{2p}} \in \tilde{L}.$$
The range for $r$ is the set of integers and $1 \leq s \leq p$.
Now to each $F_{\alpha_{r,s}}$ we associate an irreducible module $M_{r,s}$. 

It is known that for $s=p$, the singlet module $F_{\alpha_{r,p}}$ is irreducible \cite{AdM1}. So for $r \in \mathbb{Z}$, we let 
$$M_{r,p}:=F_{\alpha_{r,p}}.$$
From now on we may assume the range  to be $1 \leq s \leq p-1$. The module $F_{\alpha_{r,s}}$ has a composition series of length $2$ with respect to $\mathcal{W}(2,2p-1)$. 
We denote by 
$$M_{r,s}:={\rm soc}(F_{\alpha_{r,s}}).$$
The socle can be also taken with respect to the Virasoro algebra though.  We first consider the case $r \geq 1$. Then $M_{r,s}$ is of the same highest weight as 
$F_{\alpha_{r,s}}$. We clearly have a short exact sequence
$$0 \rightarrow M_{r,s} \rightarrow F_{\alpha_{r,s}} \rightarrow N_{r,s}  \rightarrow 0,$$
where $N_{r,s}$ is another irreducible module. By using well-known formulas for decomposition of $M_{r,s}$ into irreducible Virasoro modules \cite{CRW}, \cite{AdM1}  we easily obtain
\be \label{char+1}
{\rm ch}[{M_{r,s}}](\tau)=\frac{1}{\eta(\tau)} \left( \sum_{n=0}^\infty q^{p(\frac{r}{2}+n-\frac{s}{2p})^2}-q^{p(\frac{r}{2}+n+\frac{s}{2p})^2} \right).
\ee
Now we focus on $F_{\alpha_{r,s}}$, $r \leq 0$. Similarly, we get 

 $${\rm ch}[M_{r,s}]
=\frac{1}{\eta(\tau)} \left(\sum_{n=0}^\infty q^{p(-\frac{r}{2}+\frac{1}{2}+n+\frac{p-s}{2p})^2}-q^{p(-\frac{r}{2}+\frac{1}{2}+n+\frac{p+s}{2p})^2}\right).$$
Observe now the relation (for $ r \leq 0$)
$$ \sum_{n=0}^\infty q^{p(-\frac{r}{2}+\frac{1}{2}+n+\frac{p-s}{2p})^2}-q^{p(-\frac{r}{2}+\frac{1}{2}+n+\frac{p+s}{2p})^2}= \sum_{n=0}^\infty q^{p(\frac{r}{2}+n-\frac{s}{2p})^2}-q^{p(\frac{r}{2}+n+\frac{s}{2p})^2},$$
due to cancellations in the second sum.
To summarize, for  $r \in \mathbb{Z}$ and $1 \leq s \leq p$ we have 
$$\text{ch}[M_{r,s}](\tau)=\frac{ P_{p,p r -s}(0,\tau)-P_{p,p r +s}(0,\tau)}{\eta(\tau)},$$ 
where $P_{a,b}(\tau)$ is as in the introduction. In particular, for $M_{1,1}=\mathcal{W}(2,2p-1)$,  we get
$$\text{ch}[\mathcal{W}(2,2p-1)](\tau)=\frac{\sum_{n \in \mathbb{Z}} {\rm sgn}(n) q^{p(n+\frac{p-1}{2p})^2}}{\eta(\tau)}.$$

\begin{remark} \label{felder}
In addition to considerations coming from decomposition of Fock spaces into Virasoro algebra modules it is also useful to apply Felder's resolution in the category of $\mathcal{W}(2,2p-1)$-modules
(see formula (2.26) \cite{CRW}, for instance):
$$ 
 \cdots \rightarrow F_{\alpha_{r,s}} \xrightarrow{\tilde{Q}^{[s]}} F_{\alpha_{r+1,p-s}} \xrightarrow{\tilde{Q}^{[p-s]}} F_{\alpha_{r+2,s}} \xrightarrow{\tilde{Q}^{[s]}} F_{\alpha_{r+3,p-s}} \rightarrow \cdots,
$$
where $\tilde{Q}^{[s]}$ are suitable "powers" of the short screening operator $\tilde{Q}$.
It can be shown $M_{r,s}={\rm Ker} \ \tilde{Q}^{[s]} \subset F_{\alpha_{r,s}}$ \cite{CRW}, so by the Euler-Poincar\'e principle we easily get 
$$\text{ch}[M_{r,s}](\tau)=\sum_{n= 0}^\infty \text{ch}[ F_{\alpha_{r-2n-1,p-s}}](\tau)
-\text{ch}[ F_{\alpha_{r-2n-2,s}}](\tau).$$
This formula will be useful in the next section
\end{remark}

\subsection{Regularized characters of $\mathcal{W}(2,2p-1)$-modules}


Now, we define the regularized characters by introducing a parameter $\epsilon$.  We let
\begin{equation}\label{eq:char}
\begin{split}
\text{ch}[ F^\epsilon_\lambda](\tau)&=e^{2\pi\epsilon(\lambda-\alpha_0/2)}\frac{q^{(\lambda-\alpha_0/2)^2/2}}{\eta(\tau)} \\
\text{ch}[M^\epsilon_{r,s}](\tau)&=\sum_{n= 0}^\infty \text{ch}[ F^\epsilon_{\alpha_{r-2n-1,p-s}}](\tau)
-\text{ch}[ F^\epsilon_{\alpha_{r-2n-2,s}}](\tau)
 \end{split}
\end{equation}




Observe that typical $\epsilon$-regularized characters are simply ${\rm tr}_{F_\lambda} e^{2 \pi \epsilon (\varphi(0)-\alpha_0/2)} q^{L(0)-c/24}$. But atypical 
regularization is more subtle although very natural in view of Remark \ref{felder}.

Note, that the characters of the atypical modules are parameterized by $r,s \in\Z$ with $1\leq s\leq p$.
Also, $\text{ch}[M^\epsilon_{r,0}](\tau)=0$ and in the case $s>p$, $\text{ch}[M^\epsilon_{r,s}](\tau)$ is actually an integral combination of atypical module characters
(virtual character):
\begin{proposition}\label{prop:rel}
The regularized typical and atypical characters satisfy the following relations
\begin{equation*}
\begin{split}
 \mathrm{ch}[ F^\epsilon_{\alpha_{r,s}}](\tau)&= \mathrm{ch}[M^\epsilon_{r,s}](\tau)+\mathrm{ch}[M^\epsilon_{r+1,p-s}](\tau)
 \end{split}
 \end{equation*}
 while $\mathrm{ch}[M^\epsilon_{r,s}](\tau)$ for $p < s \leq 2p-1$ \footnote{Similar formulas can be obtained for higher $s$ but we do not need them right now.} is the following linear combination of atypical module characters
\begin{equation*}
  \begin{split}
 \mathrm{ch}[M^\epsilon_{r,s}](\tau)&= \mathrm{ch}[M^\epsilon_{r-1,s-p}](\tau)+\mathrm{ch}[M^\epsilon_{r,2p-s}](\tau)+\mathrm{ch}[M^\epsilon_{r+1,s-p}](\tau).
 \end{split}
 \end{equation*}
\end{proposition}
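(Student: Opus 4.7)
The plan is to prove both identities by direct manipulation of the defining telescoping sum (\ref{eq:char}), with one auxiliary ingredient: the elementary symmetry
\[
\alpha_{r,s+p}=\alpha_{r-1,s},
\]
which is immediate from the linear formula $\alpha_{r,s}=-\frac{r-1}{2}\sqrt{2p}+\frac{s-1}{\sqrt{2p}}$, since both sides shift $\alpha_{r,s}$ by $\alpha_+/2$. Because the Fock character $\mathrm{ch}[F^\epsilon_\lambda]$ depends only on the weight $\lambda$, this translates into $\mathrm{ch}[F^\epsilon_{\alpha_{r,s+p}}]=\mathrm{ch}[F^\epsilon_{\alpha_{r-1,s}}]$.

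For the first identity, the plan is to expand both $\mathrm{ch}[M^\epsilon_{r,s}]$ and $\mathrm{ch}[M^\epsilon_{r+1,p-s}]$ via (\ref{eq:char}). In the second sum, reindexing gives Fock characters at weights $\alpha_{r-2n,s}$ and $\alpha_{r-2n-1,p-s}$, so the $(r-2n-1,p-s)$ Fock terms cancel against those coming from the first sum. What remains is $\sum_{n\geq 0}\bigl(\mathrm{ch}[F^\epsilon_{\alpha_{r-2n,s}}]-\mathrm{ch}[F^\epsilon_{\alpha_{r-2n-2,s}}]\bigr)$, whose $N$-th partial sum is $\mathrm{ch}[F^\epsilon_{\alpha_{r,s}}]-\mathrm{ch}[F^\epsilon_{\alpha_{r-2N-2,s}}]$. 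Since $\alpha_{r-2N-2,s}\to+\infty$ linearly in $N$ while $\mathrm{ch}[F^\epsilon_\lambda]$ decays like the Gaussian $q^{(\lambda-\alpha_0/2)^2/2}$ for $\tau\in\mathbb{H}$, the subtracted term vanishes and we get $\mathrm{ch}[F^\epsilon_{\alpha_{r,s}}]$.

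For the second identity, writing $s=p+s'$ with $1\leq s'\leq p-1$ and applying the symmetry above term-by-term in (\ref{eq:char}) gives
\[
\mathrm{ch}[M^\epsilon_{r,s}]\,=\,\sum_{n\geq 0}\Bigl(\mathrm{ch}[F^\epsilon_{\alpha_{r-2n,p-s'}}]-\mathrm{ch}[F^\epsilon_{\alpha_{r-2n-3,s'}}]\Bigr).
\]
On the right-hand side, $2p-s=p-s'$, so the target sum is $\mathrm{ch}[M^\epsilon_{r-1,s'}]+\mathrm{ch}[M^\epsilon_{r,p-s'}]+\mathrm{ch}[M^\epsilon_{r+1,s'}]$. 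Expanding each of the three via (\ref{eq:char}) and collecting, the $(r-2-2n,p-s')$ Fock terms cancel between the first two sums, and the $(r-1-2n,s')$ Fock terms cancel between the second and third sums. What survives is precisely the series displayed above for $\mathrm{ch}[M^\epsilon_{r,s}]$.

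Both identities are thus purely formal consequences of (\ref{eq:char}) and the index symmetry. The hard part is not conceptual but bookkeeping: keeping the index shifts, signs, and the swap $p-s\leftrightarrow s$ straight throughout, and noting that the rearrangements are legitimate because the Gaussian decay of Fock characters dominates the linear exponential from the $\epsilon$-regularization, ensuring absolute convergence.
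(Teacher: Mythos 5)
Your proof is correct and follows essentially the same route as the paper: both identities are obtained directly from the defining series (\ref{eq:char}) together with the index symmetry $\alpha_{r,s}=\alpha_{r+1,s+p}$, with the telescoping justified by the Gaussian decay of the Fock characters. The only (cosmetic) difference is that the paper finishes the second identity by peeling off $\mathrm{ch}[F^\epsilon_{\alpha_{r,2p-s}}]$ and invoking the first identity, whereas you expand all three target characters and cancel term by term.
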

\begin{proof}
 The first equality follows directly from \eqref{eq:char}, while for the second one, we use
 that $\alpha_{r,s}=\alpha_{r+1,s+p}$ for all $r,s\in\Z$ to
 compute
 \begin{equation*}
  \begin{split}
   \text{ch}[M^\epsilon_{r,s}](\tau)&=\sum_{n= 0}^\infty \text{ch}[ F^\epsilon_{\alpha_{r-2n-1,p-s}}](\tau)
-\text{ch}[ F^\epsilon_{\alpha_{r-2n-2,s}}](\tau)\\
&= \sum_{n= 0}^\infty \text{ch}[ F^\epsilon_{\alpha_{r-2n,2p-s}}](\tau)
-\text{ch}[ F^\epsilon_{\alpha_{r-2n-3,s-p}}](\tau)\\
&= \text{ch}[ F^\epsilon_{\alpha_{r,2p-s}}](\tau)+\text{ch}[M^\epsilon_{r-1,s-p}](\tau)\\
&= \text{ch}[M^\epsilon_{r-1,s-p}](\tau)+\text{ch}[M^\epsilon_{r,2p-s}](\tau)+\text{ch}[M^\epsilon_{r+1,s-p}](\tau). 
\qquad\qquad\qquad\qquad\qquad\qquad\text{\eop}
\end{split}
 \end{equation*}
\end{proof}

\begin{proposition}\label{prop:charfalse}
Let $\beta^\pm_{r,s}=((r-1)\alpha_+\pm s\alpha_-)/2$, then the atypical characters are
\begin{equation*}
 \mathrm{ch}[M^\epsilon_{r,s}](\tau)= \mathrm{ch}[ F^\epsilon_{\alpha_0/2-\beta^-_{r,s}}](\tau)P_{\alpha_+\epsilon}(-\alpha_+\beta^-_{r,s}\tau;\alpha_+^2\tau)-
 \mathrm{ch}[ F^\epsilon_{\alpha_0/2-\beta^+_{r,s}}](\tau)P_{\alpha_+\epsilon}(-\alpha_+\beta^+_{r,s}\tau;\alpha_+^2\tau)
\end{equation*}
\end{proposition}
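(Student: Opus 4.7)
The plan is to start from the definition
$$\text{ch}[M^\epsilon_{r,s}](\tau)=\sum_{n= 0}^\infty \text{ch}[ F^\epsilon_{\alpha_{r-2n-1,p-s}}](\tau) - \text{ch}[ F^\epsilon_{\alpha_{r-2n-2,s}}](\tau)$$
and reorganize each of the two half-infinite sums into a product of a single Fock character (capturing the $n$-independent data) with a regularized partial theta function at scaled arguments (capturing the summation in $n$).

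First I would compute the charge shifts. Writing $\alpha_{r,s}-\alpha_0/2 = -\tfrac{1}{2}(r\alpha_+ + s\alpha_-)$ and using the crucial identity $p\alpha_- = -\alpha_+$ (immediate from $\alpha_+ = \sqrt{2p}$, $\alpha_- = -\sqrt{2/p}$), a short rearrangement gives
$$\alpha_{r-2n-1,p-s} - \alpha_0/2 = (n+\tfrac{1}{2})\alpha_+ - \beta^-_{r,s}, \qquad \alpha_{r-2n-2,s} - \alpha_0/2 = (n+\tfrac{1}{2})\alpha_+ - \beta^+_{r,s}.$$
This is the only step where the arithmetic of $p$ enters nontrivially; without the identity $p\alpha_- = -\alpha_+$ the two sums would not align into a common half-integer progression.

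Next I would substitute these shifts into the explicit typical character formula and separate $n$-dependent from $n$-independent factors. Expanding $((n+\tfrac{1}{2})\alpha_+ - \beta)^2 = (n+\tfrac{1}{2})^2 \alpha_+^2 - 2(n+\tfrac{1}{2})\alpha_+ \beta + \beta^2$ inside $q^{(\,\cdot\,)^2/2}$, and splitting $e^{2\pi\epsilon((n+1/2)\alpha_+-\beta)}$ additively in the exponent, the prefactor $e^{-2\pi\epsilon\beta} q^{\beta^2/2}/\eta(\tau) = \text{ch}[F^\epsilon_{\alpha_0/2-\beta}](\tau)$ pulls cleanly out of the sum for either choice $\beta \in \{\beta^\pm_{r,s}\}$.

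Finally I would identify the remaining half-integer sum
$$\sum_{k\in\Z_{\geq 0}+\frac{1}{2}} e^{2\pi(\alpha_+\epsilon) k} q^{-\alpha_+\beta k} q^{\alpha_+^2 k^2/2}$$
with $P_{\alpha_+\epsilon}(-\alpha_+\beta\tau,\, \alpha_+^2\tau)$ via the dictionary $\tau' = \alpha_+^2\tau$, $u = -\alpha_+\beta\tau$, so that $q' = q^{\alpha_+^2}$ and $z = q^{-\alpha_+\beta}$, matching the defining series of $P_\epsilon$ term by term. Applying this recognition to the $\beta^-_{r,s}$ piece (first sum) and the $\beta^+_{r,s}$ piece (second sum) and subtracting yields the claimed identity. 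There is no real obstacle here: the argument is algebraic bookkeeping once the correct charge-shift identities are established, and the only delicate input is the identity $p\alpha_-=-\alpha_+$ used above.
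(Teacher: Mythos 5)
Your proof is correct and follows exactly the route the paper intends: the paper's proof is just the remark that this is a straightforward rewriting of the defining sum \eqref{eq:char}, and your charge-shift identities (using $p\alpha_-=-\alpha_+$), factorization of the Fock prefactor, and identification of the remaining half-integer sum with $P_{\alpha_+\epsilon}(-\alpha_+\beta^\pm_{r,s}\tau,\alpha_+^2\tau)$ are precisely that rewriting, carried out correctly.
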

\begin{proof}
 This is a straightforward rewriting. \eop
\end{proof}

\subsection{Modular properties of characters}

\begin{proposition}
The modular S-transformation of typical characters is
\begin{equation*}
\begin{split}
\mathrm{ch}[ F^\epsilon_{\lambda+\alpha_0/2}]\bigl(\frac{-1}{\tau}\bigr)&=
\int_{\mathbb R} S_{\lambda+\alpha_0/2,\mu+\alpha_0/2}^\epsilon \mathrm{ch}[F^\epsilon_{\mu+\alpha_0/2}](\tau)d\mu,
\end{split}
\end{equation*}
with $S_{\lambda+\alpha_0/2,\mu+\alpha_0/2}^\epsilon= e^{2\pi\epsilon(\lambda-\mu)}e^{-2\pi i\lambda\mu}$.
 \end{proposition}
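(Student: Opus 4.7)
The plan is to reduce everything to a single Gauss integral, exploiting that the typical character is essentially a Gaussian in the charge variable divided by $\eta$. First I would substitute $\tau\mapsto -1/\tau$ into the definition \eqref{eq:char} of the typical character and use the classical transformation $\eta(-1/\tau)=\sqrt{-i\tau}\,\eta(\tau)$. After the shift $\lambda\mapsto\lambda+\alpha_0/2$ the $\alpha_0/2$ offsets cancel inside the exponentials, yielding
\[
\mathrm{ch}[F^\epsilon_{\lambda+\alpha_0/2}]\bigl(-1/\tau\bigr)\;=\;\frac{e^{2\pi\epsilon\lambda}\,e^{-\pi i\lambda^2/\tau}}{\sqrt{-i\tau}\,\eta(\tau)}.
\]
The key observation is that the regularization factor $e^{2\pi\epsilon\lambda}$ is independent of the integration variable to come and can later be split so as to land both inside the $S$-kernel and inside the target character.

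Next I would rewrite the Gaussian factor $e^{-\pi i\lambda^2/\tau}$ as a Fourier--Gauss integral by specializing Lemma \ref{lemma:gauss} to $u=0$ (so $z=1$) and $k=\lambda$, which gives
\[
e^{-\pi i\lambda^2/\tau}\;=\;\sqrt{-i\tau}\int_{\mathbb R} q^{\mu^2/2}\,e^{-2\pi i\lambda\mu}\,d\mu.
\]
Absolute convergence is automatic since $\tau\in\mathbb H$ forces $\mathrm{Im}(\tau)>0$, so the integral makes sense. Substituting back and cancelling the two $\sqrt{-i\tau}$ factors leaves
\[
\mathrm{ch}[F^\epsilon_{\lambda+\alpha_0/2}]\bigl(-1/\tau\bigr)\;=\;\frac{e^{2\pi\epsilon\lambda}}{\eta(\tau)}\int_{\mathbb R} q^{\mu^2/2}\,e^{-2\pi i\lambda\mu}\,d\mu.
\]

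Finally I would insert the trivial splitting $e^{2\pi\epsilon\lambda}=e^{2\pi\epsilon(\lambda-\mu)}\cdot e^{2\pi\epsilon\mu}$ inside the integrand. The factor $e^{2\pi\epsilon\mu}q^{\mu^2/2}/\eta(\tau)$ is then exactly $\mathrm{ch}[F^\epsilon_{\mu+\alpha_0/2}](\tau)$ by \eqref{eq:char}, while the remaining prefactor $e^{2\pi\epsilon(\lambda-\mu)}e^{-2\pi i\lambda\mu}$ is the kernel $S^\epsilon_{\lambda+\alpha_0/2,\mu+\alpha_0/2}$, completing the identification. I do not expect any serious obstacle here: in contrast to Theorem \ref{thm:partialS}, there is no partial or false theta piece, so no contour deformation or correction term $X^\epsilon$ arises, and the entire argument reduces to the single Gauss integral above.
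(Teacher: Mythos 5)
Your argument is correct and is essentially the paper's own proof: the paper simply invokes the Gauss integral of Lemma \ref{lemma:gauss} (together with $\eta(-1/\tau)=\sqrt{-i\tau}\,\eta(\tau)$), and your write-up just makes the specialization $u=0$, $k=\lambda$ and the splitting $e^{2\pi\epsilon\lambda}=e^{2\pi\epsilon(\lambda-\mu)}e^{2\pi\epsilon\mu}$ explicit. No gap; this is the intended one-line Gauss-integral computation spelled out in detail.
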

\begin{proof}
 This follows from the Gauss integral of Lemma \ref{lemma:gauss}. \eop
\end{proof}
\begin{proposition}
The modular S-transformation of atypical characters is
\begin{equation*}
\begin{split}
\text{ch}[M^\epsilon_{r,s}]\Bigl(-\frac{1}{\tau}\Bigr)&=
\int_{\mathbb R} S^\epsilon_{(r,s),\mu+\alpha_0/2}\mathrm{ch}[F^\epsilon_{\mu+\alpha_0/2}](\tau)d\mu +X^\epsilon_{r,s}(\tau)
 \end{split}
\end{equation*}
with
\[ 
 S^\epsilon_{(r,s),\mu+\alpha_0/2}= -e^{-2\pi \epsilon((r-1)\alpha_+/2+\mu)}e^{\pi i (r-1)\alpha_+\mu}
\frac{\sin\bigl(\pi s \alpha_-(\mu+i\epsilon)\bigr)}{\sin\bigl(\pi \alpha_+(\mu+i\epsilon)\bigr)}\\
\]
and
\[
X^\epsilon_{r,s}(\tau)= \frac{1}{4 \eta(\tau)}({\rm sgn}(\mathrm{Re}(\epsilon))+1)\sum_{n\in\mathbb Z}(-1)^{rn}
e^{\pi i \frac{s}{p}n}q^{\frac{1}{2}(\frac{n^2}{\alpha_+^2}-\epsilon^2)}
\bigl(q^{-i\epsilon \frac{n}{\alpha_+}}-q^{i\epsilon \frac{n}{\alpha_+}}\bigr).
\]
\end{proposition}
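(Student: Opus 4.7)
The plan is to start from Proposition \ref{prop:charfalse}, which writes $\mathrm{ch}[M^\epsilon_{r,s}](\tau)$ as a difference of two products of the form (Fock character) $\times$ (regularized partial theta $P_{\alpha_+\epsilon}$), and to apply $\tau\mapsto -1/\tau$ to each factor independently. The Fock character transforms in the Gaussian fashion used in the previous proposition; the partial theta is handled by Theorem \ref{thm:partialS}.

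The key technical device is a rescaling that makes Theorem \ref{thm:partialS} applicable. Since the partial theta's natural modular variable here is $\alpha_+^2\tau$ rather than $\tau$, I set $T'=\tau/\alpha_+^2$ and $U'_\pm=\beta^\pm_{r,s}/\alpha_+$, so that the transformed arguments $\bigl(\alpha_+\beta^\pm_{r,s}/\tau,\,-\alpha_+^2/\tau\bigr)$ coincide with $\bigl(U'_\pm/T',-1/T'\bigr)$ and the theorem applies with $\epsilon\mapsto\alpha_+\epsilon$. The Gaussian prefactor $e^{\pi i(\beta^\pm)^2/\tau}$ produced by the transformation cancels against the conjugate Gaussian coming from the transformed Fock character, so all $\sqrt{-i\tau}$ and $\tau$-dependent phases disappear and only $1/\eta(\tau)$ survives, multiplied by $e^{-2\pi\epsilon\beta^\pm}$ up to a normalization from the rescaling.

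After the substitution $x=\alpha_+ y$ in the integral produced by Theorem \ref{thm:partialS}, the integrand becomes $q^{y^2/2}e^{2\pi i\beta^\pm y}/\sin(\pi\alpha_+(y+i\epsilon))$. Taking the difference of the $\beta^-$ and $\beta^+$ contributions and using $\beta^\pm=((r-1)\alpha_+\pm s\alpha_-)/2$, the numerator collapses via a sine-of-difference identity to $-2i\,e^{-\pi\epsilon(r-1)\alpha_+}e^{\pi i(r-1)\alpha_+ y}\sin\bigl(\pi s\alpha_-(y+i\epsilon)\bigr)$, which is exactly the combination required to identify the integral piece with $\int_{\R} S^\epsilon_{(r,s),\mu+\alpha_0/2}\,\mathrm{ch}[F^\epsilon_{\mu+\alpha_0/2}](\tau)\,d\mu$ after relabeling $y\mapsto\mu$ and factoring out $q^{\mu^2/2}/\eta(\tau)$.

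For the $\vartheta_{4,\alpha_+\epsilon}$ corrections, I expand the theta sums term by term and use $\beta^\pm/\alpha_+=(r-1)/2\mp s/(2p)$ together with $\alpha_-/\alpha_+=-1/p$ to reduce the $n$th phase to $(-1)^{nr}e^{\mp\pi ins/p}$; substituting $n\mapsto -n$ in the $\beta^+$ sum collapses the difference to the alternating factor $q^{-i\epsilon n/\alpha_+}-q^{i\epsilon n/\alpha_+}$ appearing in $X^\epsilon_{r,s}(\tau)$. The main obstacle is not conceptual but coordinated bookkeeping: correctly identifying the rescaled coordinates $(T',U'_\pm)$ so that Theorem \ref{thm:partialS} can be invoked, and then tracking the Gaussian cancellations, $\sqrt{-i\tau}$ normalizations, and conversions of $\vartheta_{4,\alpha_+\epsilon}$ from the $e(T')$ to the $e(\tau)$ expansion, is where all the care is required.
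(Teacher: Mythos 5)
Your proposal is correct and follows essentially the same route as the paper: the paper's proof also starts from Proposition \ref{prop:charfalse}, applies Theorem \ref{thm:partialS} with the rescaled data $(\beta^{\pm}_{r,s}/\alpha_+,\ \tau/\alpha_+^2,\ \alpha_+\epsilon)$ together with the Gaussian transform of the Fock characters, and then combines the $\beta^{-}$ and $\beta^{+}$ contributions into the sine-quotient kernel and the $\vartheta_{4,\alpha_+\epsilon}$ correction term. The only difference is that you spell out the bookkeeping (Gaussian cancellation, the substitution $x=\alpha_+\mu$, and the $n\mapsto -n$ reindexing in the theta sums) which the paper leaves implicit.
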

Note, that in the limit $\epsilon\rightarrow 0$, $X^\epsilon_{r,s}$ vanishes. 
\begin{proof}
With Theorem \ref{thm:partialS}, we get 
\begin{equation*}
\begin{split}
\mathrm{ch}[F^\epsilon_{\alpha_0/2-\beta^\pm_{r,s}}]\Bigl(\frac{-1}{\tau}\Bigr)P_{\alpha_+\epsilon}\Bigl(\frac{\alpha_+\beta^\pm_{r,s}}{\tau};-
\frac{\alpha_+^2}{\tau}\Bigr)&=
\frac{1}{2i}\int_{\mathbb R}\frac{e^{-2\pi \epsilon\mu}e^{2\pi i \beta_{r,s}^\pm(\mu+i\epsilon)}\mathrm{ch}[ F^\epsilon_{\mu+\alpha_0/2}](\tau)}{\sin\bigl(\pi \alpha_+(\mu+i\epsilon)\bigr)}d\mu+\\
&\qquad \frac{1}{4 \eta(\tau)}({\rm sgn}(\mathrm{Re}(\epsilon))+1)\vartheta_{4,\alpha_+\epsilon}\Bigl(\frac{\beta^\pm_{r,s}}{\alpha_+};\frac{\tau}{\alpha_+^2}\Bigr)
\end{split}
\end{equation*}
and hence with Proposition \ref{prop:charfalse} the statement follows. \eop
\end{proof}

\section{A Verlinde-type formula}
Let us consider the case ${\rm Re}(\epsilon)<0$ so there is no correction term present. 
We are interested in applying the Verlinde formula. This requires a unitary S-matrix (actually, $S$-kernel), which is spoiled by the regularization and instead
we have
\begin{equation} \label{non-unitary}
 "\int_{\mathbb R}" S^\epsilon_{\lambda\mu}\overline{S^{-\bar\epsilon}_{\mu\nu}}d\mu=
"\int_{\mathbb R}"e^{-2\pi i \mu(\lambda-\nu)}e^{2\pi \epsilon(\lambda-\nu)}d\mu=e^{2\pi \epsilon(\lambda-\nu)}\delta(\lambda-\nu)=\delta(\lambda-\nu),
\end{equation}
where $\delta(x-y)$ is the Dirac delta-function supported at $x=y$.
We thus define the regularized fusion coefficients
\begin{equation} \label{fusion-coeff}
{N^\epsilon_{ab}}^c = "\int_{\mathbb R}" \frac{S^\epsilon_{a\rho}S^\epsilon_{b\rho}\overline{S^{-\bar\epsilon}_{\ c \rho}}}{S^\epsilon_{(1,1)\rho}}d\rho 
\end{equation}
where $(1,1)$ refers to the vacuum module $M_{1,1}$.
Consider the vector space $\mathcal{V}_{ch}$  generated by $\mathrm{ch}[V^\epsilon]$ where $V=M_{r,s}$ or $V=F_\lambda$. 

We want to turn $\mathcal{V}_{ch}$ 
into a commutative associative algebra (called the {\em Verlinde algebra of characters}) by defining the product to be 
\begin{equation} \label{Verlinde-char}
\mathrm{ch}[V^\epsilon_a]\times  \mathrm{ch}[V^\epsilon_b] := "\int_{\mathbb R}" {N^\epsilon_{ab}}^c\mathrm{ch}[V^\epsilon_c]dc
\end{equation}
and extending this multiplication by linearity. 
We expect the right hand side to be a finite sum with non-negative integer multiplicities (as in the Verlinde formula). 

\subsection{Making the Verlinde algebra of characters rigorous}

The integration in (\ref{non-unitary}) and (\ref{fusion-coeff}) over $\mathbb{R}$ should not be taken literally. As we shall see shortly, the function that we would like to 
integrate is clearly non-integrable. Yet, as we are primarily interested in (\ref{Verlinde-char}), and not so much (\ref{fusion-coeff}), we explain first how to make (\ref{Verlinde-char}) rigorous and then how to view  (\ref{fusion-coeff}) not as a numerical quantity but rather as  a distribution.
Thus instead of  working with
\begin{equation} \label{nonrig-double}
\int_{\mathbb R} \left( \int_{\mathbb{R}}   \frac{S^\epsilon_{a\rho}S^\epsilon_{b\rho}\overline{S^{-\bar\epsilon}_{\rho \mu}}}{S^\epsilon_{(1,1)\rho}}d\rho      \right)\mathrm{ch}[F^\epsilon_\mu] d\mu,
\end{equation}
we redefine the fusion product in the Verlinde algebra of characters as
\begin{equation} \label{rig-double}
\int_{\mathbb R} \left( \int_{\mathbb{R}}   \frac{S^\epsilon_{a\rho}S^\epsilon_{b\rho}\overline{S^{-\bar\epsilon}_{\rho \mu}}}{S^\epsilon_{(1,1)\rho}}      \mathrm{ch}[F^\epsilon_\mu] d\mu \right) d \rho 
\end{equation}
This double integral turns out to be well-defined in our examples.

To see this let us start from the classical Fourier inversion formula. Suppose that $f(x)$ and its Fourier transform $\hat{f}(x)$ lie in an appropriate $L^1$-space. Then we have
$$f(x)=\int_{\mathbb{R}} (\int_{\mathbb{R}} e^{2 \pi i (x-y)z} f(y) dy) dz.$$
Going back to (\ref{rig-double}), in this setup the test functions are essentially
$$f(\mu)=e^{2 \pi \epsilon (\mu+c)} q^{(\mu-\alpha_0/2)^2/2} e^{2 \pi \epsilon(\mu-\alpha_0/2)},$$
where we ignore the Dedekind $\eta$ denominator and $c$ does not depend on $\mu$. We are integrating
\begin{equation} \label{ff}
\int_{\mathbb{R}} ( \int_{\mathbb{R}} e^{-2 \pi i \rho(\mu + c)} f(\mu) d \mu) d \rho=f(-c)=q^{(-c-\alpha_0/2)^2/2} e^{2 \pi \epsilon(-c-\alpha_0/2)}.
\end{equation}
Notice that the same result can be inferred  by working with (\ref{nonrig-double}) and by using (heuristic) $\delta$-function
$$\delta(x-y)=\int_{\mathbb{R}}  e^{2 \pi i \rho (x-y) } d \rho,$$
as in (\ref{non-unitary}).
Then second integration, against the delta function (now viewed as  distribution) is simply evaluation so we obtain the same result as in 
(\ref{ff}). 
To handle infinite sums (see below) we only have to notice that for every $Re(\epsilon)<0$ and $f(\mu)$ as before, we have 
$$\int_{\mathbb{R}} \frac{f(\mu)}{{\rm sin}(\mu+\epsilon i)} d\mu=2 i \sum_{m=0}^\infty \int_{\mathbb{R}} f(\mu) e^{\epsilon(2m+1)- i \mu (2m+1)} d \mu.$$
We first fix $\epsilon=\epsilon_1+i \epsilon_2$, where $\epsilon_1<0$ and $\epsilon_2 \in \mathbb{R}$.
To prove the last formula we first observe that $|q^{(\lambda-\alpha_0/2)^2}|=|q|^{(\lambda-\alpha_0/2)^2}$ 
with $|q|<1$. Now, Gauss' integral formula shows that 
$\sum_{m=0}^\infty \int_{\mathbb{R}} |f(\mu) e^{(\epsilon_1+\epsilon_2 i)(2m+1)- i \mu (2m+1)}| d \mu= \sum_{m=0}^\infty \int_{\mathbb{R}} |f(\mu) e^{\epsilon_1 (2m+1)}| d \mu$ is convergent. Finally, by Fubini's 
theorem we can interchange the sum and integration. This section clarifies all  future computations involving the formal delta function.

\subsection{Verlinde  algebra of characters}
In the next theorem we explicitly determine this algebra.
\begin{theorem} \label{thm:verlinde}
Let $1\leq s,s'\leq p$, then the Verlinde algebra of characters is associative and commutative and is given by
\begin{equation*}
\begin{split}
\mathrm{ch}[F_\lambda^\epsilon]\times  \mathrm{ch}[F_\mu^\epsilon] &= \sum_{\ell=0}^{p-1}\mathrm{ch}[F_{\lambda+\mu+\ell\alpha_-}^\epsilon]\\
\mathrm{ch}[M_{r,s}^\epsilon]\times  \mathrm{ch}[F_\mu^\epsilon] &= 
\sum_{\substack{\ell=-s+2\\ \ell+s=0\, \mathrm{mod}\, 2}}^{s}\mathrm{ch}[F_{\mu+\alpha_{r,\ell}}^\epsilon]\\
\mathrm{ch}[M_{r,s}^\epsilon]\times  \mathrm{ch}[M_{r',s'}^\epsilon] &= 
\quad\sum_{\substack{\ell=|s-s'|+1\\ \ell+s+s'=1\, \mathrm{mod}\, 2}}^{min \{ s+s'-1,p \}}\mathrm{ch}[M_{r+r'-1,\ell}^\epsilon] \\
& +\sum_{\substack{\ell=p+1\\ \ell+s+s'=1\, \mathrm{mod}\, 2}}^{s+s'-1}\Bigl(\mathrm{ch}[M_{r+r'-2,\ell-p}^\epsilon]+
\mathrm{ch}[M_{r+r'-1,2p-\ell}^\epsilon]+\mathrm{ch}[M_{r+r',\ell-p}^\epsilon]\Bigr),
\end{split}
\end{equation*}
where $\displaystyle{\sum_{l=i}^j } (\cdot)=0$ for $i>j$.
\end{theorem}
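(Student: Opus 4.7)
My plan is to evaluate the right-hand side of (\ref{rig-double}) in each of the three cases by substituting the explicit $S$-kernels from Section 3.4 and exploiting the Fourier-type orthogonality (\ref{non-unitary}), made rigorous in Section 4.1, to reduce the inner $\rho$-integral to a finite combination of Dirac deltas in the character variable. The integration against $\mathrm{ch}[F^\epsilon_\mu]$ then collapses to a finite sum of characters. The main combinatorial input in every case is the Chebyshev identity
\[
\frac{\sin(n\theta)}{\sin\theta}=\sum_{\ell=0}^{n-1}e^{i(n-1-2\ell)\theta}
\]
and its bilinear Clebsch--Gordan counterpart.

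For $\mathrm{ch}[F_\lambda^\epsilon]\times \mathrm{ch}[F_\mu^\epsilon]$ and $\mathrm{ch}[M^\epsilon_{r,s}]\times \mathrm{ch}[F^\epsilon_\mu]$ the ratio $S^\epsilon_aS^\epsilon_b/S^\epsilon_{(1,1)}$ reduces, after cancellation of the common $\sin(\pi\alpha_+(\rho+i\epsilon))$ factor, to an exponential prefactor times $\sin(n\pi\alpha_-(\rho+i\epsilon))/\sin(\pi\alpha_-(\rho+i\epsilon))$, with $n=p$ in the first case (using $\alpha_+=-p\alpha_-$) and $n=s$ in the second. Applying the Chebyshev identity yields a finite sum of pure exponentials in $\rho$; each one combines with the conjugate typical kernel $\overline{S^{-\bar\epsilon}_{\rho\mu}}$ and collapses via (\ref{non-unitary}) to a single delta, picking out a Fock space with shifted charge. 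In the $F\times F$ case the shifts are $\ell\alpha_-$ for $0\le\ell\le p-1$; in the $M\times F$ case the extra factor $e^{-\pi\epsilon(r-1)\alpha_+}e^{\pi i(r-1)\alpha_+\rho}$ from $S^\epsilon_{(r,s)}$ is precisely what supplies the $r$-dependent part of $\alpha_{r,\ell}$.

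The $M\times M$ product is the most intricate. Now the kernel ratio contains $\sin(\pi s\alpha_-(\rho+i\epsilon))\sin(\pi s'\alpha_-(\rho+i\epsilon))/[\sin(\pi\alpha_-(\rho+i\epsilon))\sin(\pi\alpha_+(\rho+i\epsilon))]$, and the relevant tool is the bilinear Chebyshev identity
\[
\frac{\sin(s\theta)\sin(s'\theta)}{\sin^2\theta}=\sum_{k=0}^{\min(s,s')-1}\frac{\sin((s+s'-1-2k)\theta)}{\sin\theta},
\]
which rewrites the quotient as a linear combination of terms each having exactly the shape of an atypical $S$-kernel with first index $r+r'-1$ and second index $\ell=s+s'-1-2k$. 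After Fourier inversion this produces $\sum_\ell \mathrm{ch}[M^\epsilon_{r+r'-1,\ell}]$ with $\ell=|s-s'|+1,|s-s'|+3,\ldots,s+s'-1$. Those contributions with $\ell>p$ lie outside the canonical range $1\le\ell\le p$; to bring them into the form stated in the theorem I would apply the second identity of Proposition \ref{prop:rel}, which replaces each such term by the three-summand combination $\mathrm{ch}[M^\epsilon_{r+r'-2,\ell-p}]+\mathrm{ch}[M^\epsilon_{r+r'-1,2p-\ell}]+\mathrm{ch}[M^\epsilon_{r+r',\ell-p}]$.

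Commutativity of the product is immediate from the symmetry of the ratio $S^\epsilon_aS^\epsilon_b/S^\epsilon_{(1,1)}$ in $a,b$, while associativity follows formally from a double application of (\ref{non-unitary}) to the triple integral defining $(a\times b)\times c$, which yields the same symmetric triple-kernel expression as $a\times(b\times c)$. The hard part I expect is the bookkeeping: the $\epsilon$-linear exponentials in the typical kernel and the $e^{-\pi\epsilon(r-1)\alpha_+}$ factors in the atypical one must conspire so that the final answer is an $\epsilon$-independent integer combination of characters with indices matched on the nose against $\alpha_{r,\ell}$ and $r+r'-1$, and the $M\times M$ case additionally requires correctly splitting the range $\ell\le p$ versus $\ell>p$ and applying Proposition \ref{prop:rel} term by term. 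The analytic legitimacy of the manipulations --- absolute convergence of the sine expansions and the Fubini swap between the finite Chebyshev sum and the Fourier-inversion integral --- is already handled by the argument at the end of Section 4.1 under the standing hypothesis $\mathrm{Re}(\epsilon)<0$.
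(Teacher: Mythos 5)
Your overall strategy is the paper's: the same rigorized double integral \eqref{rig-double}, the same kernel ratios, the finite expansion of $\sin(n\theta)/\sin(\theta)$ into exponentials for the typical--typical (with $n=p$, via $\alpha_+=-p\alpha_-$) and atypical--typical (with $n=s$) products, collapse to delta functions, and Proposition \ref{prop:rel} to fold the terms with $\ell>p$ back into the canonical range. The one place you reorganize the argument is $M\times M$: the paper expands the full ratio $-\sin(sx)\sin(s'x)/(\sin(x)\sin(px))$ in one stroke, using an identity whose right-hand side is an \emph{infinite} series (convergent because $\mathrm{Re}(\epsilon)<0$), so every term is a pure exponential; the deltas then hit typical characters and the resulting infinite alternating sum is recognized as $\mathrm{ch}[M^\epsilon_{r+r'-1,\ell}]$ through the defining expansion \eqref{eq:char}. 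You instead apply the finite Clebsch--Gordan identity first, which (prefactors included, since $S^\epsilon_{(1,1)\rho}$ carries the same $-e^{-2\pi\epsilon\rho}$ factor) rewrites the fusion kernel exactly as $\sum_\ell S^\epsilon_{(r+r'-1,\ell)\rho}$ with $\ell=|s-s'|+1,\dots,s+s'-1$; that is a clean structural observation which makes the answer transparent before any integration is done.

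However, your phrase ``after Fourier inversion this produces $\mathrm{ch}[M^\epsilon_{r+r'-1,\ell}]$'' is not covered by \eqref{non-unitary}, which applies only to the purely exponential typical kernel. Inverting an atypical kernel still requires expanding $1/\sin(\pi\alpha_+(\rho+i\epsilon))$ as a geometric series (the Fubini argument of Section 4.1), producing infinitely many deltas that land on typical characters, and then resumming via \eqref{eq:char}; this is precisely the computation the paper performs, so you should state and prove this inversion as a lemma rather than treat it as automatic --- once you do, your route reduces to the paper's. Two further points the paper includes that you omit: the check that the product is compatible with the linear relation $\mathrm{ch}[F^\epsilon_{\alpha_{r-1,p-s}}]=\mathrm{ch}[M^\epsilon_{r,s}]+\mathrm{ch}[M^\epsilon_{r-1,p-s}]$ among the spanning characters, needed for the multiplication on $\mathcal{V}_{ch}$ to be well defined since the generators are not linearly independent; and associativity, for which your purely formal triple-integral argument is exactly the kind of divergent manipulation the regularization is meant to tame --- the paper instead checks it directly, or deduces it from the isomorphism with the algebra of regularized quantum dimensions (Theorem \ref{Q-thm}).
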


\begin{remark}
{\em In mathematical physics a regularization is introduced to avoid divergent quantities.
It is then required that the final result is independent of the regularization scheme.
In our case the final result is the Verlinde algebra, which indeed is independent of
the choice of our regularization parameter $\epsilon$.}
\end{remark}

\begin{proof}
We first note the following identity
\begin{equation*}
 \frac{\text{sin}(sx)}{\text{sin}(x)}=\sum_{\substack{\ell=-s+1\\ \ell+s=1\, \mathrm{mod}\, 2 }}^{s-1}e^{ix\ell}
\end{equation*}
which is verified by multiplying both sides with $\text{sin}(x)$.
It follows the Verlinde fusion of typicals with themselves. Note that $\alpha_+=-p\alpha_-$.
\begin{equation*}
 \begin{split}
  {N^\epsilon_{\lambda+\alpha_0/2,\mu+\alpha_0/2}}^{\nu+\alpha_0/2} &= -\int_{\mathbb R} e^{-2\pi i (\rho+i\epsilon)(\lambda+\mu-\nu)} 
\frac{\text{sin}(\pi \alpha_+(\rho+i\epsilon))}{\text{sin}(\pi \alpha_-(\rho+i\epsilon))}d\rho\\
&= \int_{\mathbb R} \sum_{\substack{\ell=-p+1\\ \ell+p=1\, \mathrm{mod}\, 2}}^{p-1} e^{-2\pi i (\rho+i\epsilon)(\lambda+\mu-\nu+\ell\alpha_-/2)}d\rho \\
&= \sum_{\substack{\ell=-p+1\\ \ell+p=1\, \mathrm{mod}\, 2}}^{p-1}\delta(\lambda+\mu-\nu+\ell\alpha_-/2) e^{2\pi\epsilon(\lambda+\mu-\nu+\ell\alpha_-/2)} \\
&= \sum_{\substack{\ell=-p+1\\ \ell+p=1\, \mathrm{mod}\, 2}}^{p-1}\delta(\lambda+\mu-\nu+\ell\alpha_-/2)
 \end{split}
\end{equation*}
and hence  
\[
\mathrm{ch}[F_{\lambda+\alpha_0/2}^\epsilon]\times  \mathrm{ch}[F_{\mu+\alpha_0/2}^\epsilon] = 
\sum_{\substack{\ell=-p+1\\ \ell+p=1\, \mathrm{mod}\, 2}}^{p-1}\mathrm{ch}[F_{\lambda+\mu+\ell\alpha_-/2+\alpha_0/2}^\epsilon] 
\]
so that  
\begin{equation*}
\begin{split}
\mathrm{ch}[F_\lambda^\epsilon]\times  \mathrm{ch}[F_\mu^\epsilon] &= 
\sum_{\substack{\ell=-p+1\\ \ell+p=1\, \mathrm{mod}\, 2}}^{p-1}\mathrm{ch}[F_{\lambda+\mu+\ell\alpha_-/2-\alpha_0/2}^\epsilon]
= \sum_{\ell=0}^{p-1}\mathrm{ch}[F_{\lambda+\mu+\ell\alpha_-}^\epsilon]
\end{split}
 \end{equation*}
since $-\alpha_0=(p-1)\alpha_-$. 
The Verlinde fusion of atypicals with typicals is proven in the same manner as the previous case.
\begin{equation*}
 \begin{split}
  {N^\epsilon_{(r,s), \mu+\alpha_0/2}}^{\nu+\alpha_0/2} &= \int_{\mathbb R} e^{-2\pi i (\rho+i\epsilon)(\mu-(r-1)\alpha_+/2-\nu)} 
\frac{\text{sin}(\pi s\alpha_-(\rho+i\epsilon))}{\text{sin}(\pi \alpha_-(\rho+i\epsilon))}d\rho\\
&= \int_{\mathbb R} \sum_{\substack{\ell=-s+1\\ \ell+s=1\, \mathrm{mod}\, 2}}^{s-1} e^{-2\pi i (\rho+i\epsilon)(\mu-(r-1)\alpha_+/2-\nu+\ell\alpha_-/2)} d\rho\\
&= \sum_{\substack{\ell=-s+1\\ \ell+s=1\, \mathrm{mod}\, 2}}^{s-1}\delta(\mu-(r-1)\alpha_+/2-\nu+\ell\alpha_-/2)\\
&= \sum_{\substack{\ell=-s+1\\ \ell+s=1\, \mathrm{mod}\, 2}}^{s-1}\delta(\alpha_{r,\ell+1}+\mu-\nu)
 \end{split}
\end{equation*}
Recall that $\alpha_{r,s}=-\frac{1}{2}(r\alpha_++s\alpha_--\alpha_0)$.
It follows  that
\[
\mathrm{ch}[M_{r,s}^\epsilon]\times  \mathrm{ch}[F_{\mu+\alpha_0/2}^\epsilon] = 
\sum_{\substack{\ell=-s+2\\ \ell+s=0 \, \mathrm{mod}\, 2}}^{s}\mathrm{ch}[F_{\mu+\alpha_0/2+\alpha_{r,\ell}}^\epsilon]. 
\]
Finally, the case of atypicals uses the following identity for $Im(x)<0$, and positive integers $s,s',p$
\begin{equation*}
 -\frac{\text{sin}(sx)\text{sin}(s'x)}{\text{sin}(x)\text{sin}(px)}=\sum_{\ell'=0}^\infty e^{-i px(2\ell'+1)}
\sum_{\substack{\ell=|s'-s|+1\\ s+s'+\ell=1\,\text{mod}\, 2}}^{s'+s-1}
\Bigl(e^{-i x\ell} -e^{i x\ell} \Bigr),
\end{equation*}
 $Im(x)<0$ ensures convergence and the identity is verified by multiplying both sides with the denominator of the left-hand side. 
We thus get 
\begin{equation*}
 \begin{split}
  {N^\epsilon_{(r,s)(r',s')}}^{\nu+\alpha_0/2} &= -\int_{\mathbb R} e^{-\pi i (\rho+i\epsilon)(-(r+r'-2)\alpha_+-2\nu)} 
\frac{\text{sin}(\pi s\alpha_-(\rho+i\epsilon))\text{sin}(\pi s'\alpha_-(\rho+i\epsilon))}{\text{sin}(\pi \alpha_-(\rho+i\epsilon))\text{sin}(\pi \alpha_+(\rho+i\epsilon))}d\rho\\
&\hspace*{-5mm}= \int_{\mathbb R} \sum_{\ell'=0}^\infty 
\sum_{\substack{\ell=|s'-s|+1\\ s+s'+\ell=1\,\text{mod}\, 2}}^{s'+s-1}
 e^{-\pi i (\rho+i\epsilon)((2\ell'+3-r-r')\alpha_+-2\nu)} \Bigl(e^{-\pi i \alpha_-\ell (\rho+i\epsilon)} -e^{\pi i \alpha_-\ell (\rho+i\epsilon)} \Bigr)d\rho\\
&= \sum_{\ell'=0}^\infty 
\sum_{\substack{\ell=|s'-s|+1\\ s+s'+\ell=1\,\text{mod}\, 2}}^{s'+s-1}\Bigl(\delta(\alpha_{r+r'-2\ell'-2,-\ell+1}-\nu)-\delta(\alpha_{r+r'-2\ell'-2,\ell+1}-\nu)\Bigr)
 \end{split}
\end{equation*}

Now using $\alpha_{r,s}+\alpha_0/2=\alpha_{r-1,s-1}$ and $\alpha_{r,s}=\alpha_{r+1,p+s}$
and the expression of atypical characters in terms of typicals,
we get 
\begin{equation*}
 \begin{split}
\mathrm{ch}[M_{r,s}^\epsilon]\times  \mathrm{ch}[M_{r',s'}^\epsilon] &= 
\sum_{\ell'=0}^\infty 
\sum_{\substack{\ell=|s'-s|+1\\ s+s'+\ell=1\,\text{mod}\, 2}}^{s'+s-1}\Bigl(\mathrm{ch}[F_{\alpha_{r+r'-1-2\ell'-1,p-\ell}}^\epsilon]-
\mathrm{ch}[F_{\alpha_{r+r'-1-2\ell'-2,\ell}}^\epsilon\Bigr)\\
&=\sum_{\substack{\ell=|s-s'|+1\\ \ell+s+s'=1\, \mathrm{mod}\, 2}}^{s+s'-1}\mathrm{ch}[M_{r+r'-1,\ell}^\epsilon]
\end{split}
\end{equation*}
In the case of $p<s+s'-1 \leq 2p-1$ we have to use Proposition \ref{prop:rel} 
to obtain
\begin{equation*}
 \begin{split}
\mathrm{ch}[M_{r,s}^\epsilon]\times  \mathrm{ch}[M_{r',s'}^\epsilon] &= 
 \sum_{\substack{\ell=p+1\\ \ell+s+s'=1\, \mathrm{mod}\, 2}}^{s+s'-1}\Bigl(\mathrm{ch}[M_{r+r'-2,\ell-p}^\epsilon]+
\mathrm{ch}[M_{r+r'-1,2p-\ell}^\epsilon]+\mathrm{ch}[M_{r+r',\ell-p}^\epsilon]\Bigr)+\\
&\quad\sum_{\substack{\ell=|s-s'|+1\\ \ell+s+s'=1\, \mathrm{mod}\, 2}}^{p}\mathrm{ch}[M_{r+r'-1,\ell}^\epsilon].
\end{split}
\end{equation*}
We also have to check that the following relation inside the Verlinde algebra of characters 
$${\rm ch}[{F}^\epsilon_{\alpha_{r-1,p-s}}]={\rm ch}[M_{r,s}^\epsilon]+{\rm ch}[M_{r-1,p-s}^\epsilon]$$
is consistent with the proposed multiplication. This follows immediately from the relation 
\begin{equation*} 
\begin{split}
{\rm ch}[{F}_{\alpha_{r-1, p-s}}^\epsilon] \times {\rm ch}[{F}_{\nu}^\epsilon] &= 
({\rm ch}[M_{r,s}^\epsilon]+{\rm ch}[M_{r-1,p-s}^\epsilon]) \times {\rm ch}[{F}_{\mu}^\epsilon] \\
&=\sum_{l=-s+2; 2}^s  {\rm ch}[{F}_{\mu+\alpha_{r,l}}^\epsilon]+\sum_{l=-(p-s)+2; 2}^{p-s} {\rm ch} [{F}^\epsilon_{\mu+\alpha_{r-1,l}}]\\
&= \sum_{l=0}^{p-1} {\rm ch}[{F}_{\alpha_{r-1,p-s-2l}+\mu}^\epsilon].
\end{split}
\end{equation*}
Commutativity is clear from the definition, while associativity can be easily checked directly (It also follows from Theorem \ref{Q-thm} below). 
This completes the proof of the theorem. \eop
\end{proof}


The previous computations with the characters is very useful because it gives us lots of hints about the fusion rules 
between triples of modules for the singlet. Although we do not have a proof that the category of $\mathcal{W}(2,2p-1)$-Mod
is a braided tensor category, we believe this to be the case (or at least a suitable sub-category). Thus we can talk about its Grothendieck ring. 
\begin{conjecture} \label{fusion-rules} 
The relations in Theorem \ref{thm:verlinde} also hold inside the Grothendieck ring.
\end{conjecture}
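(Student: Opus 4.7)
The plan is to upgrade the character-level identities of Theorem \ref{thm:verlinde} to genuine identities in the Grothendieck ring of a suitable braided tensor category of $\mathcal{W}(2,2p-1)$-modules. The first task is to fix the right category: one should take the category of finitely generated $\mathbb{Z}_{\geq 0}$-gradable modules (possibly enlarged by logarithmic modules to ensure closure under tensor products) and verify the Huang--Lepowsky--Zhang hypotheses needed to endow it with a $P(z)$-tensor structure. Since $\mathcal{W}(2,2p-1)$ is not $C_2$-cofinite this is delicate, but one can try to leverage the embedding $\mathcal{W}(2,2p-1) \hookrightarrow F_0$ into a Fock module of the Heisenberg algebra, where convergence of intertwining operator compositions is better controlled, and transport the tensor structure by restriction.

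Once the tensor structure is in place, I would proceed in two layers. First, compute the fusion of two typical Fock modules $F_\lambda$ and $F_\mu$ as $\mathcal{W}(2,2p-1)$-modules. Viewing these as modules for the extension $V_{\widetilde{L}}$ and then decomposing back to $\mathcal{W}(2,2p-1)$ should recover the prediction $\sum_{\ell=0}^{p-1} F_{\lambda+\mu+\ell\alpha_-}$, because lattice vertex algebra tensor products produce Fock charges that add, and the branching over the $2p$ cosets of $\sqrt{2p}\,\mathbb{Z} \subset \widetilde{L}$ yields exactly the $p$-term sum. Second, pass to atypicals by substituting the Felder resolution from Remark \ref{felder} into one tensor slot; exactness of the tensor functor then yields the atypical fusion rules as alternating sums of already-known typical fusion rules, which after telescoping reproduce the formulas of Theorem \ref{thm:verlinde}. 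Proposition \ref{prop:rel} handles the range $p < s+s' \leq 2p-1$ where the three-term atypical decomposition appears.

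The principal obstacle will be making the Felder-resolution argument rigorous at the level of the Grothendieck ring. The resolution is infinite, so one must check that, for each fixed conformal weight, only finitely many terms contribute, and that the alternating sum truly represents the class of $M_{r,s}$. A secondary concern is to rule out logarithmic extensions in atypical-atypical fusion: the character identities of Theorem \ref{thm:verlinde} control only composition classes and do not immediately preclude nontrivial indecomposable extensions whose characters happen to match. Commutativity and associativity hold automatically from the character computation, so the remaining structural work is entirely on the categorical side; independent consistency checks through the resolution-based approach of \cite{RW} and \cite{CMW} can serve as a verification that no composition factors have been missed.
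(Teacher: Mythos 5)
The statement you are addressing is stated in the paper as a \emph{conjecture}, with no proof given: the authors explicitly defer a vertex-operator-algebra approach to future work, and they only \emph{believe} that (a suitable subcategory of) $\mathcal{W}(2,2p-1)$-Mod carries a braided tensor structure. Your proposal does not close this gap; it is a roadmap whose decisive steps are precisely the open problems. Existence of the $P(z)$-tensor structure for this non-$C_2$-cofinite algebra is not something you can obtain by ``transporting the tensor structure by restriction'' from the Heisenberg vertex algebra $F_0$: fusion over a subalgebra is not the restriction of fusion over the larger algebra, and your own target formula shows this --- over the Heisenberg algebra the fusion of $F_\lambda$ with $F_\mu$ is the single Fock module $F_{\lambda+\mu}$, while the claimed singlet fusion is the $p$-fold sum $\sum_{\ell=0}^{p-1}F_{\lambda+\mu+\ell\alpha_-}$, so the two tensor structures are genuinely different and one is not induced from the other. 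Verifying the Huang--Lepowsky--Zhang hypotheses in this setting is the hard analytic content, and nothing in your outline supplies it.

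Even granting a tensor structure, two further steps are asserted rather than proved. First, your Felder-resolution argument requires (bi-)exactness, or at least right-exactness plus control of derived terms, of the fusion functor, and this same exactness is needed for the Grothendieck group to carry a well-defined ring structure compatible with fusion; in logarithmic settings fusion is generally only right exact, so this cannot be waved through. Second, your remark that ``commutativity and associativity hold automatically from the character computation'' conflates the Verlinde algebra of characters (Theorem \ref{thm:verlinde}, which the paper does prove) with the Grothendieck ring: the whole content of the conjecture is that the former computes the latter, and character identities alone cannot establish it, since the character map need not be injective on Grothendieck classes a priori and, more importantly, the fusion products themselves have not been computed. On the other hand, your worry about logarithmic extensions is not the real obstruction --- in a Grothendieck ring only composition factors matter --- so the effort should be redirected from that point to the exactness and existence issues above. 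As it stands, the proposal restates the conjecture together with a plausible but unexecuted strategy, which is essentially the status the paper itself assigns to the problem.
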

A vertex operator algebra approach to this conjecture will be subject of \cite{AdM4}.

\subsection{Regularized quantum dimensions}

We introduce the regularized quantum dimension of a module for ${\rm Re}(\epsilon)<0$ as 
\begin{equation}
 \qdim{V^\epsilon}= \lim_{\tau\rightarrow 0+}\frac{\text{ch}[V^\epsilon(\tau)]}{\text{ch}[M_{1,1}^\epsilon](\tau)}.
\end{equation}
They are
\begin{proposition} \label{q-dim-reg}
The regularized quantum dimension of typical characters are
\[
\qdim{F_\lambda^\epsilon}= q_{\epsilon}^{2\lambda-\alpha_0}\frac{\mathrm{sin}(-\pi\alpha_+\epsilon i)}{\mathrm{sin}(\pi\alpha_-\epsilon i)}=
q_{\epsilon}^{2\lambda-\alpha_0}\sum_{\substack{\ell=-p+1\\ \ell+p=1\,\mathrm{mod}\, 2}}^{p-1}  q_{\epsilon}^{\alpha_-\ell}
\]
and of atypicals they are
\[
\qdim{M_{r,s}^\epsilon}= q_{\epsilon}^{-(r-1)\alpha_+}\frac{\mathrm{sin}(\pi s\alpha_-\epsilon i)}{\mathrm{sin}(\pi\alpha_-\epsilon i)}=
q_{\epsilon}^{-(r-1)\alpha_+}\sum_{\substack{\ell=-s+1\\ \ell+s=1\,\mathrm{mod}\, 2}}^{s-1}  q_{\epsilon}^{\alpha_-\ell}
\]
for $q_\epsilon=e^{\pi\epsilon}$.
\end{proposition}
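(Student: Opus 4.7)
The plan is to compute $\lim_{\tau\to 0^+ i}\text{ch}[V^\epsilon](\tau)$ in the two cases ($V=F_\lambda$ or $V=M_{r,s}$) by first applying the modular $S$-transformation $\tau=-1/\tau'$ and then letting $\tau'=it'\to i\infty$, so that $q'=e^{2\pi i\tau'}\to 0$. In this regime each typical Fock character
$\text{ch}[F^\epsilon_{\mu+\alpha_0/2}](\tau')=e^{2\pi\epsilon\mu}q'^{\mu^2/2}/\eta(\tau')$
is a Gaussian in $\mu$ of width $1/\sqrt{t'}$ concentrated at $\mu=0$, and all $\mu$-independent prefactors will cancel in the ratio $\text{ch}[V^\epsilon]/\text{ch}[M^\epsilon_{1,1}]$, leaving only the $S$-matrix entries evaluated at $\mu=0$.

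For the typical case the $S$-kernel is not even needed explicitly: using the standard modular transformation $\eta(\tau)=\sqrt{-i\tau'}\,\eta(\tau')=\sqrt{t'}\,\eta(\tau')$ together with $q^{(\lambda-\alpha_0/2)^2/2}\to 1$ as $\tau\to 0$, one obtains
\[
\text{ch}[F^\epsilon_\lambda](\tau)\;\sim\;\frac{e^{2\pi\epsilon(\lambda-\alpha_0/2)}}{\sqrt{t'}\,\eta(\tau')}.
\]
For the atypical case, since ${\rm Re}(\epsilon)<0$ the correction term $X^\epsilon_{r,s}$ in the $S$-transformation formula of Section 3.4 vanishes, so $\text{ch}[M^\epsilon_{r,s}](\tau)=\int_{\mathbb R}S^\epsilon_{(r,s),\mu+\alpha_0/2}\text{ch}[F^\epsilon_{\mu+\alpha_0/2}](\tau')\,d\mu$. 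The exponential factor $e^{-2\pi\epsilon\mu}$ inside $S^\epsilon_{(r,s),\mu+\alpha_0/2}$ cancels the $e^{2\pi\epsilon\mu}$ coming from the character, and the remaining $\mu$-dependent factor $e^{\pi i(r-1)\alpha_+\mu}\sin(\pi s\alpha_-(\mu+i\epsilon))/\sin(\pi\alpha_+(\mu+i\epsilon))$ is analytic in $\mu$ at $0$ and bounded uniformly on $\mathbb R$. By dominated convergence against $q'^{\mu^2/2}=e^{-\pi t'\mu^2}$, whose integral is $1/\sqrt{t'}$, one reads off
\[
\text{ch}[M^\epsilon_{r,s}](\tau)\;\sim\;\frac{S^\epsilon_{(r,s),\alpha_0/2}}{\sqrt{t'}\,\eta(\tau')}.
\]

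Taking the ratio cancels the universal factor $1/(\sqrt{t'}\,\eta(\tau'))$, giving
\[
\qdim{F^\epsilon_\lambda}=\frac{e^{2\pi\epsilon(\lambda-\alpha_0/2)}}{S^\epsilon_{(1,1),\alpha_0/2}},\qquad \qdim{M^\epsilon_{r,s}}=\frac{S^\epsilon_{(r,s),\alpha_0/2}}{S^\epsilon_{(1,1),\alpha_0/2}}.
\]
Substituting $S^\epsilon_{(r,s),\alpha_0/2}=-e^{-\pi\epsilon(r-1)\alpha_+}\sin(\pi s\alpha_- i\epsilon)/\sin(\pi\alpha_+ i\epsilon)$ and using $q_\epsilon=e^{\pi\epsilon}$ yields the closed forms in the statement (for the typical case one also uses $p\alpha_-=-\alpha_+$ so that $-\sin(\pi\alpha_+i\epsilon)=\sin(-\pi\alpha_+i\epsilon)$ can be written as $\sin(p\pi\alpha_-i\epsilon)$). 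The equivalent sum representations then follow from the identity $\sin(sx)/\sin(x)=\sum_{\ell=-s+1,\,\ell+s\equiv 1\,(2)}^{s-1}e^{ix\ell}$ used already in the proof of Theorem \ref{thm:verlinde}, specialized at $x=\pi\alpha_- i\epsilon$; the $\ell\mapsto-\ell$ symmetry of the index set converts $e^{ix\ell}=e^{-\pi\alpha_-\epsilon\ell}$ into the form $q_\epsilon^{\alpha_-\ell}$.

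The only substantive technical point is the dominated convergence argument for the atypical integral. One must exhibit, uniformly for $t'\geq t_0>0$, an $L^1(\mathbb R,d\mu)$ majorant of the integrand. This is provided by $|q'^{\mu^2/2}|=e^{-\pi t'\mu^2}\leq e^{-\pi t_0\mu^2}$ together with the pointwise identity $|\sin(\pi\alpha_\pm(\mu+i\epsilon))|^2=\sin^2(\pi\alpha_\pm(\mu-\mathrm{Im}\,\epsilon))+\sinh^2(\pi\alpha_\pm\mathrm{Re}\,\epsilon)$, which is pinched between positive constants precisely because $\mathrm{Re}(\epsilon)\neq 0$. With this uniform bound, the error beyond the leading Gaussian asymptotic is of order $1/t'^{3/2}$, hence irrelevant in the ratio as $t'\to\infty$.
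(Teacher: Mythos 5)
Your proof is correct, but it follows a genuinely different route from the paper's. The paper never returns to the $S$-kernel here: it takes the factorization of Proposition \ref{prop:charfalse}, $\mathrm{ch}[M^\epsilon_{r,s}]=\mathrm{ch}[F^\epsilon_{\alpha_0/2-\beta^-_{r,s}}]P_{\alpha_+\epsilon}(-\alpha_+\beta^-_{r,s}\tau;\alpha_+^2\tau)-\mathrm{ch}[F^\epsilon_{\alpha_0/2-\beta^+_{r,s}}]P_{\alpha_+\epsilon}(-\alpha_+\beta^+_{r,s}\tau;\alpha_+^2\tau)$, and simply computes $\lim_{\tau\to 0}\eta(\tau)\,\mathrm{ch}[F^\epsilon_\lambda]=e^{2\pi\epsilon(\lambda-\alpha_0/2)}$ together with the termwise limit $\lim_{\tau\to 0}P_{a\epsilon}(b\tau;c\tau)=\sum_{n\geq 0}e^{2\pi\epsilon a(n+1/2)}=(e^{-a\pi\epsilon}-e^{a\pi\epsilon})^{-1}$, a geometric series convergent exactly because $\mathrm{Re}(\epsilon)<0$; the ratio of these limits gives the sine quotients, and the sum forms follow from the same $\sin(sx)/\sin(x)$ identity you invoke. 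You instead pass through the modular $S$-transformation of Section 3.4 (legitimate, since it precedes this proposition and the $X^\epsilon_{r,s}$ term drops for $\mathrm{Re}(\epsilon)<0$) and extract the $\tau\to 0$ asymptotics by Gaussian concentration, arriving at $\qdim{M^\epsilon_{r,s}}=S^\epsilon_{(r,s),\alpha_0/2}/S^\epsilon_{(1,1),\alpha_0/2}$. Your key technical step—the uniform $L^1$ majorant via $|\sin(\pi\alpha_\pm(\mu+i\epsilon))|^2=\sin^2(\pi\alpha_\pm(\mu-\mathrm{Im}\,\epsilon))+\sinh^2(\pi\alpha_\pm\mathrm{Re}\,\epsilon)$, pinched away from zero because $\mathrm{Re}(\epsilon)\neq 0$—is sound, and the same pinching guarantees $S^\epsilon_{(1,1),\alpha_0/2}\neq 0$ so the ratio is well defined (worth stating explicitly). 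The trade-off: the paper's argument is shorter and entirely elementary, avoiding any asymptotic analysis of integrals, while yours makes conceptually transparent that the regularized quantum dimension is the analogue of the rational-CFT ratio $S_{a0}/S_{00}$ of $S$-kernel entries evaluated at the vacuum label, which is precisely the structural fact underlying Theorem \ref{Q-thm}.
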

\begin{proof}
We have the limits
\[
\lim_{\tau\rightarrow 0}\eta(q) \text{ch}[F_\lambda^\epsilon]=e^{2\pi \epsilon(\lambda-\alpha_0/2)}=q_{\epsilon}^{2\lambda-\alpha_0}
\]
and  for positive $a$
\[
\lim_{\tau\rightarrow 0}P_{a\epsilon}(b\tau;c\tau)= \sum_{n=0}^\infty e^{2\pi\epsilon a(n+1/2)}=(e^{-a\pi\epsilon}-e^{a\pi\epsilon})^{-1}
\]
Observe that here it was essential that ${\rm Re}(\epsilon)<0$.  
\[
\lim_{\tau\rightarrow 0}\eta(q) \text{ch}[M_{r,s}^\epsilon]=\frac{e^{-\beta_{r,s}^-2\pi\epsilon}-e^{-\beta_{r,s}^+2\pi\epsilon}}{e^{-a\pi\epsilon}-e^{a\pi\epsilon}}
= e^{-(r-1)\alpha_+\epsilon}\frac{\mathrm{sin}(\pi s\alpha_-\epsilon i)}{\mathrm{sin}(-\pi\alpha_+\epsilon i)}.
\]
The quantum dimensions follow. The sum expansion of the quotients of sin$(x)$ are as in the proof of Theorem \ref{thm:verlinde}. \eop
\end{proof}

The regularized quantum dimensions should be regarded as functions of the regularization parameter $\epsilon$ with ${\rm Re}(\epsilon)<0$. Consider the vector space $\mathcal{Q}$ spanned by (regularized) quantum dimensions of atypical and typical 
modules. Pointwise multiplication of quantum dimensions defines a commutative product on  $\mathcal{Q}$, which compares nicely to the Verlinde algebra.
\begin{theorem} \label{Q-thm}
The algebra of regularized quantum dimensions $\mathcal{Q}$ is isomorphic to the Verlinde algebra $\mathcal{V}_{ch}$.  
\end{theorem}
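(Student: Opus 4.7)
The plan is to exhibit an explicit isomorphism $\phi: \mathcal{V}_{ch}\to\mathcal{Q}$ by $\phi(\mathrm{ch}[V^\epsilon])=\qdim{V^\epsilon}$, extended $\mathbb{C}$-linearly. Since $\phi$ is defined by a limit, it automatically respects every $\mathbb{C}$-linear relation holding in $\mathcal{V}_{ch}$, and in particular the relations of Proposition~\ref{prop:rel}; so well-definedness is immediate. Surjectivity onto $\mathcal{Q}$ is clear from the definition of $\mathcal{Q}$. Thus the content is to verify (i) that $\phi$ intertwines the Verlinde product with pointwise multiplication of functions of $\epsilon$, and (ii) that $\phi$ is injective.

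For (i), the key input is the expansion of Proposition~\ref{q-dim-reg}, which rewrites every regularized quantum dimension as $q_\epsilon^{c}\cdot\sum q_\epsilon^{\alpha_-\ell}$ with $\ell$ running over a symmetric range of the correct parity. Pointwise multiplication of two such expressions thus becomes a convolution of symmetric finite sums of exponentials. I would handle the three cases of Theorem~\ref{thm:verlinde} in turn and show that the same trigonometric identities already used in that proof---namely
\[
\frac{\sin(sx)}{\sin(x)}=\sum_{\substack{\ell=-s+1\\ \ell+s\equiv 1\,(2)}}^{s-1} e^{ix\ell},\qquad
\frac{\sin(sx)\sin(s'x)}{\sin(x)\sin(px)}=-\sum_{\ell'=0}^{\infty}e^{-ipx(2\ell'+1)}\!\!\sum_{\substack{\ell=|s-s'|+1\\ \ell+s+s'\equiv 1\,(2)}}^{s+s'-1}\!\bigl(e^{-ix\ell}-e^{ix\ell}\bigr)
\]
---match the Verlinde product term by term after specialising $x=i\pi\alpha_-\epsilon$ (with the series of the second identity now convergent because $\mathrm{Re}(\epsilon)<0$). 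For the typical--typical case, one checks directly that $q_\epsilon^{\alpha_0}\sum_{\ell=0}^{p-1}q_\epsilon^{2\ell\alpha_-}=\sin(-\pi\alpha_+\epsilon i)/\sin(\pi\alpha_-\epsilon i)$ using $\alpha_0=-(p-1)\alpha_-$. The atypical--typical case is analogous with the range truncated by $s$.

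For (ii), note that every $\qdim{V^\epsilon}$ is a finite $\mathbb{Z}$-linear combination of exponentials $q_\epsilon^{c}$ with distinct exponents determined by the data $(r,s)$ or $\lambda$ of the module. Since finitely many distinct exponentials in $\epsilon$ are linearly independent over $\mathbb{C}$, any relation $\sum a_i \qdim{V_i^\epsilon}=0$ forces the supports in exponents to cancel in matched pairs, and a short bookkeeping using $\alpha_{r,s}=\alpha_{r+1,s+p}$ and Proposition~\ref{prop:rel} shows that this already forces $\sum a_i \mathrm{ch}[V_i^\epsilon]=0$ in $\mathcal{V}_{ch}$.

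The main obstacle will be the atypical--atypical computation when $s+s'-1>p$: the fusion rule of Theorem~\ref{thm:verlinde} is split into two sums at $\ell=p$ and reassembled using the three-term identity of Proposition~\ref{prop:rel}, so on the $\mathcal{Q}$-side one must see that the same splitting arises naturally from the expansion of $\sin(sx)\sin(s'x)/(\sin(x)\sin(px))$, essentially from the factor $\sum_{\ell'\geq 0}e^{-ipx(2\ell'+1)}$ whose $\ell'=0$ term precisely produces the three-term correction. Once this bookkeeping is matched, commutativity and associativity of $\mathcal{Q}$ are automatic from pointwise multiplication, giving associativity of $\mathcal{V}_{ch}$ for free, which is the mild advantage alluded to in the statement.
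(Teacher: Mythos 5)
Your proposal is correct and follows essentially the same route as the paper: compute pointwise products of the regularized quantum dimensions via the sine-quotient expansions of Proposition \ref{q-dim-reg}, handle the terms with $\ell>p$ through the extended definition of $\qdim{M_{r,\ell}^\epsilon}$ and Proposition \ref{prop:rel} (valid since all limits exist), and deduce injectivity from the linear independence of the power functions $q_\epsilon^{\nu}$. One minor bookkeeping correction: on the quantum-dimension side the atypical--atypical product has denominator $\sin(\pi\alpha_-\epsilon i)^2$ rather than $\sin(x)\sin(px)$, so the three-term splitting for $s+s'-1>p$ does not come from the factor $\sum_{\ell'\geq 0}e^{-ipx(2\ell'+1)}$ you cite but directly from Proposition \ref{prop:rel} applied to $\qdim{M_{r+r'-1,\ell}^\epsilon}$ with $\ell>p$ --- a detail, not a gap.
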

\begin{proof}
The products of regularized quantum dimensions are 
\begin{equation}\nonumber
\begin{split}
\qdim{ F_\lambda^\epsilon} \times \qdim{ F_\mu^\epsilon}&=
q_{\epsilon}^{2\lambda+2\mu-\alpha_0}\frac{\mathrm{sin}(-\pi\alpha_+\epsilon i)}{\mathrm{sin}(\pi\alpha_-\epsilon i)}
\sum_{\substack{\ell=-p+1\\ \ell+p=1\,\mathrm{mod}\, 2}}^{p-1}  q_{\epsilon}^{\alpha_-\ell-\alpha_0}\\
&= \sum_{\ell=0}^{p-1} \frac{\mathrm{sin}(-\pi\alpha_+\epsilon i)}{\mathrm{sin}(\pi\alpha_-\epsilon i)} q_{\epsilon}^{2\lambda+2\mu+2\alpha_-\ell-\alpha_0}
= \sum_{\ell=0}^{p-1}\qdim{F_{\lambda+\mu+\ell\alpha_-}^\epsilon}\\
\qdim{M_{r,s}^\epsilon} \times \qdim{F_\mu^\epsilon}&=
q_{\epsilon}^{2\mu-\alpha_0-(r-1)\alpha_+}\frac{\mathrm{sin}(-\pi\alpha_+\epsilon i)}{\mathrm{sin}(\pi\alpha_-\epsilon i)}
\sum_{\substack{\ell=-s+1\\ \ell+s=1\,\mathrm{mod}\, 2}}^{s-1}  q_{\epsilon}^{\alpha_-\ell-\alpha_0}\\
&\hspace*{-5mm}= \sum_{\substack{\ell=-s+2\\ \ell+s=0\,\mathrm{mod}\, 2}}^{s} \frac{\mathrm{sin}(-\pi\alpha_+\epsilon i)}{\mathrm{sin}(\pi\alpha_-\epsilon i)} 
q_{\epsilon}^{2\mu-(r\alpha_++\ell\alpha_--\alpha_0)-\alpha_0}
= \sum_{\substack{\ell=-s+2\\ \ell+s=0\,\mathrm{mod}\, 2}}^{s}\qdim{F_{\mu+\alpha_{r,\ell}}^\epsilon}
\end{split}
\end{equation}
\begin{equation}\nonumber
\begin{split}
\qdim{M_{r,s}^\epsilon} \times \qdim{M_{r',s'}^\epsilon}&=
 \frac{q_{\epsilon}^{-(r+r'-2)\alpha_+}}{\mathrm{sin}(\pi\alpha_-\epsilon i)}\frac{q_{\epsilon}^{-s\alpha_-}-q_{\epsilon}^{s\alpha_-}}{2i}
\sum_{\substack{\ell=-s'+1\\ \ell+s'=1\,\mathrm{mod}\, 2}}^{s'-1}  q_{\epsilon}^{\alpha_-\ell}\\
&\hspace*{-10mm}= \sum_{\substack{\ell=-s'+1\\ \ell+s'=1\,\mathrm{mod}\, 2}}^{s'-1}  q_{\epsilon}^{-(r+r'-2)\alpha_+}\frac{\mathrm{sin}(\pi\alpha_-(\ell+s)\epsilon i)}{\mathrm{sin}(\pi\alpha_-\epsilon i)}
=\sum_{\substack{\ell=|s-s'|+1\\ \ell+s+s'=1\, \mathrm{mod}\, 2}}^{s+s'-1}\qdim{M_{r+r'-1,\ell}^\epsilon}.
\end{split}
\end{equation}
Here $\qdim{M_{r,s}^\epsilon}$ for $s>p$ is defined as
\[  \lim_{\tau\rightarrow 0+}\frac{\text{ch}[M_{r,s}^\epsilon(\tau)]}{\text{ch}[M_{1,1}^\epsilon](\tau)}\]
so that by Proposition \ref{prop:rel} 
\[ \qdim{M_{r,s}^\epsilon} = \qdim{M_{r-1,s-p}^\epsilon}+\qdim{M_{r,2p-s}^\epsilon}+\qdim{M_{r+1,s-p}^\epsilon}\]
since all limits involved exist. 
It follows that
\begin{equation}\nonumber
\begin{split}
\qdim{M_{r,s}^\epsilon} \times \qdim{M_{r',s'}^\epsilon}&=
 \sum_{\substack{\ell=|s-s'|+1\\ \ell+s+s'=1\, \mathrm{mod}\, 2}}^{s+s'-1}\qdim{M_{r+r'-1,\ell}^\epsilon}
\end{split}
\end{equation}
for $s+s'-1\leq p$ and
\begin{equation}\nonumber
\begin{split}
\qdim{M_{r,s}^\epsilon} \times &\qdim{M_{r',s'}^\epsilon}=
 \sum_{\substack{\ell=|s-s'|+1\\ \ell+s+s'=1\, \mathrm{mod}\, 2}}^{s+s'-1}\qdim{M_{r+r'-1,\ell}^\epsilon}+\\
& \sum_{\substack{\ell=p+1\\ \ell+s+s'=1\, \mathrm{mod}\, 2}}^{s+s'-1}\Bigl(\qdim{M_{r+r'-2,\ell-p}^\epsilon}+
\qdim{M_{r+r'-1,2p-\ell}^\epsilon}+\qdim{M_{r+r',\ell-p}^\epsilon}\Bigr).
\end{split}
\end{equation}
for $s+s'-1>p$.

Observe also the relation 
$$\qdim {{F}^\epsilon_{\alpha_{r-1,p-s}}}=\qdim{M_{r,s}^\epsilon}+\qdim{M_{r-1,p-s}^\epsilon},$$
which is true as all three limits involved exist. 
The rest of the proof is the observation that the linear map from $\mathcal{V}_{ch}$ to $\mathcal{Q}$ induced by
$${\rm ch}(X^\epsilon) \mapsto \qdim {X^\epsilon}$$
is one-to-one, which follows easily by using the linear independence of power functions $q_\epsilon^{\nu}$.
\eop
\end{proof}

\begin{remark}
 {\em In rational conformal field theory, the map from the Verlinde algebra to quantum dimensions is an algebra homomorphism
 with non-trivial kernel. The information about fusion rules obtained from quantum dimension is then very limited. 
 In our case the quantum dimension
 in the limit $\epsilon\rightarrow 0$ of typical modules $F_\lambda$ is $p$ and of atypicals $M_{r,s}$ is $s$.
 On the other hand the regularized quantum dimension as a function of $\epsilon$ contains the same information as the Verlinde algebra.
 }
\end{remark}

\begin{remark}
{\em Recall the fusion rules for atypical representations of the Virasoro vertex algebra $L(c_{p,1},0)$ (cf. \cite{L}, \cite{Fl}, and \cite{M}):
$$L(c_{p,1},h_{r,s}) \times L(c_{p,1},h_{r',s'})=\sum_{r'' \in A(r,r'), s'' \in A(s,s')} L(c_{p,1},h_{r'',s''}),$$
where we assume that all indices are positive and $A_{i,j}=\{ i+j-1, i+j-3,\cdots, |i-j|+1 \}.$
This formula merely indicates the fusion rules among triples of irreducibles, defined as dimensions of the space of 
intertwining operators \cite{HLZ}, and should not be viewed as a 
relation in the (hypothetical) Grothendieck ring.
If we view $L(c_{p,1},h_{r'',s''})$ as the top (summand) component of an atypical singlet module, the above fusion rules can be used 
to give an upper bound for the singlet fusion rules. But this "upper bound" is of course different compared to the proposed fusion rules in Conjecture \ref{fusion-rules}. }
\end{remark}

\begin{remark}
{\em As shown in \cite{BM},  partial and false theta functions admit higher rank generalizations coming from higher rank ADE-type Lie algebras in a way that $\mathfrak{g}=sl_2$ recovers the functions studied in this paper. In the same paper, various properties of characters of modules and their quantum dimensions are studied. In particular, we expect their $q$-dimensions to be positive integers. In \cite{CMW} we also study the $(p_+,p_-)$ singlet algebra and the supertriplet introduced in \cite{AdM2}. 
}
\end{remark}

\hspace*{1cm}

\noindent Department of Mathematical and Statistical Sciences, University of Alberta,
Edmonton, Alberta  T6G 2G1, Canada. 
\emph{email: creutzig@ualberta.ca}

\hspace*{1cm}

\noindent Department of Mathematics and Statistics, SUNY-Albany, 1400 Washington Avenue, Albany, NY 12222, USA.
\emph{email: amilas@albany.edu}

\end{document}